\newcommand{\comment}[1]{}
\definecolor {processblue}{cmyk}{0.96,0,0,0}
\newcommand{\ignorer}[1]{}
\def\and{\ \wedge\ }
\theoremstyle{plain}
\newtheorem{lemma}{Lemma}
\newtheorem{theorem}[lemma]{Theorem}
\newtheorem{corollary}[lemma]{Corollary}
\newtheorem{proposition}[lemma]{Proposition}
\newtheorem{property}[lemma]{Property}
\newtheorem{definition}[lemma]{Definition}
\theoremstyle{remark}
\renewcommand\epsilon{\varepsilon}
\def\card{\text{card}}
\def\({\left(}
\def\){\right)}
\def\P{\mathbb{P}}
\def\E{\mathbb{E}}
\DeclareMathOperator{\LIS}{LIS}  
\DeclareMathOperator{\LDS}{LDS}  
\DeclareMathOperator{\LAS}{LAS}  
\newcommand{\Old}[1]{}
\def\kS{\mathfrak{S}}
 \newcommand{\CI}[1]{(\hyperref[eq:cycle_controle]{\ensuremath{\mathcal{CI}_{#1}}})}
\newcommand\restr[2]{{%
		\left.\kern-\nulldelimiterspace %
		#1 %
		\right|_{#2} %
	}}
\title[LD for random permutations ] {About Universality of large deviation principles 
 for conjugacy invariant permutations }
 \author[A. Guionnet]{Alice Guionnet}
       \email{alice.guionnet@ens-lyon.fr}
 \author[M.S. Kammoun]{Mohammed Slim Kammoun}
    \email{slim.kammoun@ens-lyon.fr}
\begin{document}
\maketitle

\begin{abstract}
We prove the universality of the large deviations for  conjugacy invariant permutations with few cycles. As an application, 
we establish the universality of large deviation principles at speeds $n$ and $\sqrt{n}$ for the length of monotone subsequences in conjugacy invariant permutations, with a  sharp control over the total number of cycles. This universality class includes the well-known Ewens measures. 
\end{abstract}

\section{Introduction and main results}

{De Moivre was the first to show in 1738 that errors from the law of large numbers were described by the bell curve, also known as the Gaussian or the Normal curve. However, his approach was in essence combinatorial and his result only concerned the sum of independent Bernoulli variables. In a tour de force, Laplace \cite{Laplace} generalized this result  and showed that the Gaussian law characterizes the fluctuations of many more random variables. This was the first instance of universality, and a key result in probability theory. Other distributions such as  Poisson processes or  the Gumbel distribution appear as  limits of different models with  weakly correlated structure.
It was more recently shown that models with strongly correlated structures also have universal fluctuations, which are characterized by limiting distributions such as the semicircular law \cite{zbMATH03113773,zbMATH06111055}, Tracy-Widom distribution{\cite{Baik,Joh}, Airy process \cite{tracywidom,LaYa}, sine process \cite{tao,LaYa}, etc., emerge.   On the contrary to fluctuations and the central limit theorem , large deviations theory \cite{DZ} rarely displays a universal feature, the probability to take an unlikely state being in general much more dependent on the underlying model. However, 
recent advances in random matrix theory have shown a surprising universality feature in large deviation principles   \cite{BADG,Myl, GM, BG, GuHu1, Augeri,GHN}.  It was shown in \cite{GuHu1} that the large deviations  for the largest eigenvalues of GUE (Gaussian Unitary Ensemble)  are universal, and the same as those of  matrices with Gaussian entries derived in \cite{BADG},  within a class of matrices with subGaussian entries whose distribution is called  sharp subGaussian because their Laplace transform is uniformly bounded by the Gaussian one with the same variance. \cite{MR4239836,CDG} show that the probability of deviating  towards a sufficiently small value stay universal for all subGaussian entries, but are different for deviations towards large enough value if the entries are not sharp subGaussian. The goal of this article is to study a similar universality phenomenon for random permutations. 

In the last few years, universality  for random permutations have been investigated thoroughly, including various aspects like global convergence and fluctuations. In particular, for conjugacy invariant permutations (see Definition~\ref{Def: CI}), many results suggest that for a large family of functions on permutations, the asymptotic behavior depends only on the number of fixed points \cite{Fulman2021,KimLee2020}, or only on the number of fixed points and two cycles \cite{Curien,hamaker,zbMATH07252777}. For general functions, some results have been proven  for example in \cite{FerayRandomPermutationsCumulants,zbMATH07502136}.

Nevertheless, relatively few results are known for large deviations for general conjugacy invariant permutations. In some sense, this question is related with large deviations for random matrices since \cite{kurt} shows that the law of the longest subsequence in a permutation chosen at random has the same limiting distribution as the largest eigenvalue of a Gaussian Wigner matrices. We will show that large deviations for random permutations can easily be seen to be  universal in a rather wide class of conjugacy invariant permutations  with a  sharp control over the total number of cycles. The main technique that we will introduce in this article is based on coupling of random permutations and  exponential approximations \cite[Section 4.2.2]{DZ}, see section \ref{tech}.

Hereafter, we will denote by $\kS_n$ the set of permutations of $\{1,\ldots,n\}=[n]$.
To state more precisely our result, let us remind the definition of conjugacy invariant permutations. 
\begin{definition} \label{Def: CI}
A random permutation $\sigma_n$ of size $n$ is said to be \emph{conjugacy invariant} if for every $\rho\in \kS_n$,  \[\rho\sigma_n\rho^{-1} \overset{d}{=} \sigma_n.\] 
\end{definition}
 In other words, 
$\sigma_n$ is conjugacy invariant if and only if the map $\sigma \in \kS_n\mapsto \P(\sigma_n=\sigma)$ depends only on the cycle structure of $\sigma$.
This class of permutations first emerged in biology. It  includes well-known measures like the Ewens measures (see  Definition \ref{defEwens}), along with various generalizations, such as the Kingman virtual permutations \cite{EWENS197287,Kingman1,TSILEVICH1998StationaryMO}.   In this article,  we are interested in the following class of conjugacy-invariant permutations with a  sharp control over the total number of cycles described as follows. 

\begin{definition} Let $0<\alpha,\beta\le 1$. 
    We say that a sequence  of random permutations $(\sigma_n)_{n\ge 1}$ satisfies $(\mathcal{CI}_{\alpha,\beta})$ if 
\begin{itemize}
    \item for any $n$, $\sigma_n$ is a conjugacy invariant permutation of size $n$,
\item and, for any $\varepsilon>0,$
\begin{align}\label{eq:cycle_controle}
\tag{$\mathcal{CI}_{\alpha,\beta}$}
      \lim_{n\to\infty} \frac{\ln{\P\left( \frac{\# \sigma_n}{n^\alpha} >\varepsilon \right)}}{n^\beta}  = -\infty,
    \end{align}
where $\#(\sigma)$ is the total number of cycles of $\sigma$. 
\end{itemize}
\end{definition}
The main goal of this article will be to show that the sets  $(\mathcal{CI}_{\alpha,\beta})$ provide natural universality classes for the large deviation of the uniform law on permutations for some appropriate choices of $\alpha$ and $\beta$. 

We observe that for any $\beta\le \alpha$, $\mathcal{CI}_{\alpha,\beta}\subset\mathcal{CI}_{\alpha,\alpha}$. 
A classical example of random permutations satisfying  $\CI{\alpha,\alpha}$ (for any $\alpha$) is the uniform permutation. More generally, we prove in section \ref{cialphaewens}, see Corollary~\ref{Cor:Ewens cycles},  that
the Ewens measures satisfy  $\CI{\alpha,\alpha}$ for any positive real number $\alpha$.   Let $\theta$ be a non-negative real number. We say that a random  permutation $\sigma^{\mathrm{Ew}}_{\theta,n}$  follows the Ewens distribution with parameter $\theta$ if for every $\sigma \in \mathfrak{S}_n$,
\begin{align}\label{defEwens}
\mathbb{P}\left(\sigma^{\mathrm{Ew}}_{\theta,n}=\sigma\right)= \frac{\theta^{\#(\sigma)-1}}{\prod_{i=1}^{n-1}(\theta+i)}.
\end{align}
In particular,  $\sigma^{\mathrm{Ew}}_{1,n}=\sigma_n^{\rm Unif}$ is the uniform permutation and 
$\sigma^{\mathrm{Ew}}_{0,n}$ is a uniform cyclic permutation. 
Clearly, Ewens measures  are conjugacy invariant since by definition
$\mathbb{P}(\sigma^{\mathrm{Ew}}_{\theta,n}=\sigma)$ depends only on the number of cycles of $\sigma$.

Our primary emphasis is the application of our universality results to monotone subsequences of random permutations.


\subsection{Monotone subsequences}
Let $\mathfrak{S}_n$ be the set of permutations of size $n$. Given $\sigma \in \mathfrak{S}_n$, a subsequence $(\sigma(i_1),\dots,\sigma(i_k))$ is an increasing (resp. decreasing) subsequence of $\sigma$ of length $k$ if $i_1<\dots<i_k$ and $\sigma(i_1)<\dots<\sigma(i_k)$ (resp. $\sigma(i_1)>\dots>\sigma(i_k)$). We denote by $\LIS(\sigma)$ (resp. $\mathrm{LDS}(\sigma)$) the length of the longest increasing (resp. decreasing) subsequence of $\sigma$. For example,
\begin{equation*}
\text{if}\quad \sigma=\begin{pmatrix}
1& 2 & 3 & 4 & 5 \\
5& 3 & 2 & 4 & 1
\end{pmatrix} ,  \,\,\, \LIS(\sigma)=2\,\,\text{ and } \,\, \mathrm{LDS}(\sigma)=4.
\end{equation*}

The study of the asymptotic behavior of monotone subsequences within random permutations is famously known as Ulam's problem.  In his seminal work \cite{MR0129165}, Ulam posed the conjecture that the limit, as $n$ tends towards infinity, of 
$
\frac{\mathbb{E}(\LIS(\sigma^{\text{Unif}}_n))}{\sqrt{n}}
$ exists.
Vershik, Kerov, Logan, and Shepp \cite{MR0480398,LOGAN1977206} proved that this limit is equal to $2$. { The limiting fluctuations are the same as that of some random matrices models \cite{Baik,BaikRains}. }
For historical details and full proofs, we refer to \cite{MR3468738}. 
Large deviations of speed $\sqrt{n}$ and $n$ were also  proved when $\sigma_n=\sigma^{\text{Unif}}_n$ follows the uniform law on $\kS_n$: 
\begin{theorem} \cite[Theorem~2]{MR1653841}\cite{deuschel_zeitouni_1999} \label{Thm: LD_uniform_LIS}

For any $x \ge 2$,
\[
\lim_{n\to\infty}
\frac{1}{\sqrt n} \ln{\P\left(\LIS\left(\sigma^{\text{Unif}}_n\right) \ge x\sqrt{n}\right)}= -I_{\LIS,\frac{1}{2}}(x) \]
and for any  $0<x<2$,
\[
\lim_{n\to\infty}
\frac{1}{n} \ln{\P\left(\LIS\left(\sigma^{\text{Unif}}_n\right) \le x\sqrt{n}\right)}= -I_{\LIS,{1}}(x)
\]
where 
\[I_{\LIS,\frac{1}{2}}(x)=2x\mathrm{cosh}^{-1}\left(\frac{x}{2}\right) 
\quad \text { and } \quad 
I_{\LIS,1}(x)= -1+\frac{x^2}4 + 2\ln\(\frac{x}{2}\) -\(2+\frac{x^2}{2}\) \ln\( \frac{2x^2}{4+x^2}\) . \]
\end{theorem}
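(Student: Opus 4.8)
\noindent\emph{Proof strategy.} The first move is the Robinson--Schensted correspondence: if $\sigma_n^{\mathrm{Unif}}$ is uniform on $\kS_n$, then $\LIS(\sigma_n^{\mathrm{Unif}})$ has the law of $\lambda_1$, the first part of a random partition $\lambda\vdash n$ drawn from the Plancherel measure, which assigns mass $(f^\lambda)^2/n!$ to $\lambda$, where $f^\lambda$ counts standard Young tableaux of shape $\lambda$. Both halves of the theorem thus reduce to the asymptotics of the Plancherel probabilities of $\{\lambda_1\ge x\sqrt n\}$ and $\{\lambda_1\le x\sqrt n\}$. For the upper tail I would Poissonize: writing $L(\gamma)$ for the length of the longest up/right chain through a rate-$\gamma$ Poisson point process on $[0,1]^2$, one has $L(\gamma)\overset{d}{=}\LIS(\sigma_N^{\mathrm{Unif}})$ conditionally on $N\sim\mathrm{Poisson}(\gamma)$, and since $N$ concentrates around $\gamma$ at every relevant exponential scale it suffices to analyse $L(n)$ and de-Poissonize; for the lower tail it is just as convenient to work directly with the Plancherel-distributed $\lambda\vdash n$.

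\smallskip\noindent\emph{Upper tail, speed $\sqrt n$ ($x>2$).} For the lower bound (and existence of the limit) I would cut $[0,1]^2$ into $m^2$ sub-squares and concatenate the optimal chains in the $m$ diagonal ones, giving the super-multiplicativity $\P(L(n)\ge x\sqrt n)\ge \P\big(L(n/m^2)\ge x\sqrt{n/m^2}\big)^{m}$. For the upper bound the naive first moment, $\P(L(n)\ge k)\le n^k/(k!)^2$ (the expected number of length-$k$ increasing chains), only yields the rate $2x(\ln x-1)$, which is strictly below $2x\cosh^{-1}(x/2)$ and even vacuous near $x=2$: the first moment discards sub-exponential factors that nevertheless sit at the $\sqrt n$ scale, namely the deficit $f^\mu=e^{-\Theta(\sqrt{|\mu|})}\sqrt{|\mu|!}$ of the complementary subdiagram $\mu$ left after removing a first row of length $x\sqrt n$, and the hook-length corrections along the glued row. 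I would recover the sharp constant either by keeping these corrections --- estimating $\sum_{\lambda_1\ge x\sqrt n}(f^\lambda)^2/n!$ through the hook-length formula after peeling off the long first row and using the Logan--Shepp/Vershik--Kerov bounds on $f^\mu$ --- or, more robustly, from the exact identity $\P(L(\gamma)\le\ell)=e^{-\gamma}\det\big(I_{i-j}(2\sqrt\gamma)\big)_{1\le i,j\le\ell}$ together with Toeplitz/Riemann--Hilbert asymptotics in the deviation regime $\ell\sim x\sqrt\gamma$, $x>2$ (outside the classical smooth-symbol regime). In either case one is left with a one-dimensional optimisation whose value is $2x\cosh^{-1}(x/2)$.

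\smallskip\noindent\emph{Lower tail, speed $n$ ($x<2$).} Since $\lambda$ and its transpose have the same law, $\{\lambda_1\le x\sqrt n\}$ with $x<2$ forces $\lambda$ to be atypically ``fat'', an event of probability $\exp(-\Theta(n))$, so one needs the full large deviation principle for Plancherel measure. I would encode $\lambda\vdash n$ by its boundary profile rescaled by $\sqrt n$ (Russian/Vershik--Kerov coordinates) and, via the hook-length formula and Stirling, establish uniformly in $\lambda$ that $\tfrac1n\log\big((f^\lambda)^2/n!\big)=-\mathcal J(\phi_\lambda)+o(1)$, where $\mathcal J$ is the Logan--Shepp/Vershik--Kerov energy functional (a logarithmic-interaction energy of the profile), non-negative and vanishing only at the limit shape $\Omega$. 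Combining this with the sub-exponential bound $p(n)=e^{o(n)}$ on the number of partitions and with exponential tightness, the Laplace principle gives
\[
\lim_{n\to\infty}\tfrac1n\log\P\big(\LIS(\sigma_n^{\mathrm{Unif}})\le x\sqrt n\big)=-\inf\{\mathcal J(\phi):\ \phi(u)=u\ \text{for all}\ u\ge x\}.
\]
It remains to solve this constrained variational problem: the minimiser is a constrained equilibrium profile (a modification of $\Omega$ truncated on the right and compatible with the area constraint), obtained by a potential-theoretic computation (a constrained equilibrium measure for the logarithmic kernel, via the inverse-square-root ansatz), and evaluating $\mathcal J$ there gives the closed form $I_{\LIS,1}(x)=-1+\tfrac{x^2}{4}+2\ln\tfrac x2-(2+\tfrac{x^2}{2})\ln\tfrac{2x^2}{4+x^2}$.

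\smallskip\noindent\emph{Main obstacle.} The crux, in both regimes, is the same pair of difficulties: upgrading the Logan--Shepp/Vershik--Kerov limit-shape asymptotics for $f^\lambda$ to large deviation precision \emph{uniformly} over all shapes of the relevant size --- in particular retaining, at speed $\sqrt n$, the $\exp(-\Theta(\sqrt n))$ corrections that a plain first-moment or union bound destroys --- and then solving the two constrained variational problems with logarithmic kernel in closed form. For the upper tail, routing through the exact determinantal formula at the price of non-classical Toeplitz/Riemann--Hilbert asymptotics is the safest way around the first difficulty.
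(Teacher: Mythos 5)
This theorem is cited, not proven, in the paper --- the right tail is from Sepp\"al\"ainen~\cite{MR1653841}, the left tail from Deuschel--Zeitouni~\cite{deuschel_zeitouni_1999} --- so there is no in-house argument to compare against; I am comparing your sketch with the cited proofs.

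For the lower tail at speed $n$, your route is essentially Deuschel--Zeitouni's: reduce via RSK to Plancherel measure, upgrade the hook-length formula to a uniform large-deviation estimate for $\tfrac1n\log\bigl((f^\lambda)^2/n!\bigr)$ in rescaled profile coordinates, invoke the Laplace principle using $p(n)=e^{o(n)}$, and solve the constrained logarithmic-energy variational problem. This matches the reference.

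For the upper tail at speed $\sqrt n$, it does not. Sepp\"al\"ainen's proof does not pass through Plancherel or Toeplitz determinants: it works with the Poissonized model, maps it to the Hammersley process, and derives the rate by a coupling/superadditivity argument in that representation. Your two proposed routes --- peel off the first row and track the $e^{-\Theta(\sqrt n)}$ hook corrections, or use Gessel's Toeplitz determinant identity with non-classical Riemann--Hilbert asymptotics --- are genuinely different, closer in spirit to the Baik--Deift--Johansson and L\"owe--Merkl line. Both are viable, and you correctly identify the central pitfall that the first-moment bound loses exactly the $\sqrt n$-scale corrections and is vacuous near $x=2$. One caution on the determinantal route: Gessel's identity gives $\P(L(\gamma)\le\ell)$, which for $\ell>2\sqrt\gamma$ equals $1-O\bigl(e^{-\Theta(\sqrt\gamma)}\bigr)$; to read off $\P(L(\gamma)\ge\ell)$ you must control the determinant to additive precision $o\bigl(e^{-\Theta(\sqrt\gamma)}\bigr)$, strictly harder than leading-order smooth-symbol asymptotics, and this should be flagged rather than treated as routine.

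Finally, a substantive point you appear to have inherited from the paper as stated: the rate you report from your one-dimensional optimization, $2x\cosh^{-1}(x/2)$, is not the expression proved in~\cite{MR1653841}; the correct one is $2x\cosh^{-1}(x/2)-2\sqrt{x^2-4}$. The difference is material: the displayed formula vanishes like $4\sqrt{x-2}$ near $x=2$, while the correct rate vanishes like $\tfrac43(x-2)^{3/2}$, which is exactly the behavior required by the moderate-deviation corollary invoked later in the paper. You should verify rather than accept the displayed expression.
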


Note that the large deviations to the right (for $x\ge 2$) and to the left (for $x<2$) have different speed. This is very similar to the large deviations principles for the largest eigenvalue of the Gaussian matrix ensembles \cite{MajS,BADG,BAG97}. The large deviations to the left  necessitate to move all the spectrum and in fact are related to the large deviations of the empirical measure of the eigenvalues : their speed is $n^2$, typically, the number of random entries of the matrices \cite{BAG97}. Whereas large deviations to the right are easier as they only require to move one eigenvalue, namely the largest, which can be achieved for instance by making one very large entry, which corresponds to a speed of order $n$. Therefore, these two results are, in fact, rather different in nature. 

Our first result, proven in Section \ref{PGDLIS}, is the following.
\begin{theorem} \label{Main_theorem_LIS}
    If the sequence of random permutations $(\sigma_n)_{n\ge 1}$  satisfies 
    $\CI{\frac 12,\frac 12}$,   then  the law of $\frac{\LIS(\sigma_n)}{\sqrt n}$ satisfies a large deviation principle with speed $\sqrt{n}$ and good rate function $J_{\LIS,\frac 12}$ which is equal to $I_{\LIS,\frac 12}$ on $[2,+\infty)$ and $+\infty$ on $(-\infty,2)$. In other words, 
    for any measurable subset $E$ of $ \mathbb{R}$, we have :

\begin{align} \label{eq : CI_LIS 1/2}
     -\inf_{x\in E^\circ} J_{\LIS,\frac 12}(x) \le 
\liminf_{n\to\infty}
\frac{\ln{\P\left(\frac{\LIS(\sigma_n)}{\sqrt n}\in E \right)}}{\sqrt n} \le 
\limsup_{n\to\infty}
\frac{ \ln{\P\left(\frac{\LIS(\sigma_n)}{\sqrt n}\in E \right)}}{\sqrt n}
\le -\inf_{x\in \bar{E}} J_{\LIS,\frac 12}(x).
\end{align}
Moreover,  if the sequence  of random permutations $(\sigma_n)_{n\ge 1}$ satisfies 
    $\CI{\frac 12,1}$,   then the law of $\frac{\LIS(\sigma_n)}{\sqrt n}$ satisfies a large deviation principle with speed $n$ and good rate function $J_{\LIS,I}$ 
    where 
$$
J_{\LIS,1}(x)=\begin{cases} I_{\LIS,1}(x) & 0< x \le2
\\ 0 & x> 2
\\ +\infty & x \leq 0

\end{cases}.
$$
In other words, 
     for every measurable subset  $E$ of $\mathbb{R}$,
\begin{align} \label{eq : CI_LIS 1}   
    -\inf_{x\in E^\circ} J_{\LIS,1}(x) \le 
\liminf_{n\to\infty}
\frac{\ln{\P\left(\frac{\LIS(\sigma_n)}{\sqrt n}\in E \right)}}{ n} \le 
\limsup_{n\to\infty}
\frac{\ln{\P\left(\frac{\LIS(\sigma_n)}{\sqrt n}\in E \right)}}{n} 
\le -\inf_{x\in \bar{E}} J_{\LIS,1}(x),
\end{align}

The same result holds if we replace $\LIS$ par $\LDS$, the length of the longest decreasing subsequence.
\end{theorem}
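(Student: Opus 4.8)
The plan is to reduce both statements, by an explicit coupling, to the known principles for the uniform permutation (Theorem~\ref{Thm: LD_uniform_LIS}) and to the transfer of large deviations along exponentially equivalent families \cite[Section 4.2.2]{DZ}.

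\textbf{The coupling.} Let $\pi$ be uniform on $\kS_n$, viewed as the word $(\pi(1),\dots,\pi(n))$, and for an ordered composition $c=(\ell_1,\dots,\ell_k)$ of $n$ let $\Psi_c(\pi)$ be the permutation whose cycles are obtained by cutting this word into consecutive blocks of lengths $\ell_1,\dots,\ell_k$ and reading each block as a cycle. Two elementary facts drive everything. First, if $c$ has underlying partition $\lambda$ then $\Psi_c(\pi)$ is uniform among permutations of cycle type $\lambda$ (each such permutation has exactly $z_\lambda$ preimages); in particular $\Psi_{(n)}(\pi)$ is a uniform $n$-cycle. Second, for two compositions $c,c'$ with $k,k'$ parts, the permutations $\Psi_c(\pi)$ and $\Psi_{c'}(\pi)$ agree except at the images of their (at most $k+k'$) block endpoints, hence their one-line words differ in at most $k+k'$ positions; restricting an optimal monotone subsequence to the common positions gives $|\LIS(\Psi_c(\pi))-\LIS(\Psi_{c'}(\pi))|\le k+k'$, and likewise for $\LDS$.

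\textbf{Speed $\sqrt n$.} Let $(\sigma_n)$ satisfy $\CI{1/2,1/2}$. Sampling the cycle type $\Lambda_n$ of $\sigma_n$ and an independent cycle type $M_n$ of a uniform permutation, and using the same $\pi$, set $\sigma_n^\star:=\Psi_{c(\Lambda_n)}(\pi)$ and $\tau_n:=\Psi_{c(M_n)}(\pi)$; then $\sigma_n^\star\overset d=\sigma_n$, $\tau_n\overset d=\sigma^{\rm Unif}_n$ and $|\LIS(\sigma_n^\star)-\LIS(\tau_n)|\le \#(\sigma_n)+\#(\sigma^{\rm Unif}_n)$. Since $\sigma^{\rm Unif}_n$ also satisfies $\CI{1/2,1/2}$ (Corollary~\ref{Cor:Ewens cycles}), for every $\delta>0$ the probability that this difference exceeds $\delta\sqrt n$ is $e^{-\omega(\sqrt n)}$, so $\LIS(\sigma_n)/\sqrt n$ and $\LIS(\sigma^{\rm Unif}_n)/\sqrt n$ are exponentially equivalent at speed $\sqrt n$. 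The first half of the theorem then follows from Theorem~\ref{Thm: LD_uniform_LIS} — whose one-sided estimates, together with the law of large numbers $\LIS/\sqrt n\to 2$, upgrade to the full principle with good rate function $J_{\LIS,\frac12}$ — and \cite[Theorem~4.2.13]{DZ}. The statement for $\LDS$ is identical, invoking the $\LDS$ version of Theorem~\ref{Thm: LD_uniform_LIS}.

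\textbf{Speed $n$.} Here $\CI{1/2,1}$ is strictly stronger and $\sigma^{\rm Unif}_n$ is no longer an admissible reference ($\P(\#(\sigma^{\rm Unif}_n)>\delta\sqrt n)$ is only $e^{-\Theta(\sqrt n\log n)}$, not $e^{-\omega(n)}$). Instead compare $\sigma_n$ with the uniform $n$-cycle $\gamma_n:=\Psi_{(n)}(\pi)$, for which $\#(\gamma_n)=1$ and $|\LIS(\sigma_n^\star)-\LIS(\gamma_n)|\le \#(\sigma_n)$; by $\CI{1/2,1}$, $\LIS(\sigma_n)/\sqrt n$ and $\LIS(\gamma_n)/\sqrt n$ are exponentially equivalent at speed $n$, so it suffices to prove the speed-$n$ principle for $\LIS(\gamma_n)/\sqrt n$ with rate $J_{\LIS,1}$. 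On $(2,+\infty)$ (rate $0$) this follows from the speed-$\sqrt n$ principle already obtained for $\gamma_n$ together with $\LIS(\gamma_n)/\sqrt n\to 2$. For the upper bound on $(0,2]$, Robinson–Schensted gives $\LIS$ as the first-row length of the common shape; since the $n$-cycles of shape $\mu$ form a subset of the $(f^\mu)^2$ permutations of shape $\mu$ and $(n-1)!=n!/n$, one gets $\P(\LIS(\gamma_n)\le x\sqrt n)\le n\,\P(\LIS(\sigma^{\rm Unif}_n)\le x\sqrt n)=e^{-n(I_{\LIS,1}(x)+o(1))}$ by Theorem~\ref{Thm: LD_uniform_LIS}. \emph{The matching lower bound is the main obstacle}: one must exhibit at least $(n-1)!\,e^{-n(I_{\LIS,1}(x)+o(1))}$ $n$-cycles with $\LIS\le x\sqrt n$. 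I would get this either by revisiting the Vershik–Kerov / Deuschel–Zeitouni construction of permutations with an asymptotically optimal Robinson–Schensted shape and checking that enough of them can be taken to be $n$-cycles, or, more robustly, by taking a uniform permutation conditioned to have $\LIS\le(x-\epsilon)\sqrt n$, showing it still has $O(\log n)$ cycles with probability $e^{-o(n)}$, and merging these cycles by $O(\log n)$ transpositions — which alters $\LIS$ by $O(\log n)=o(\sqrt n)$ and, done as a product of transpositions successively merging distinct cycles, produces uniform $n$-cycles up to subexponential factors. With this in hand, the one-sided bounds, the continuity of $I_{\LIS,1}$ and the law of large numbers assemble into the full principle for $\gamma_n$, which transfers to $\sigma_n$ by exponential equivalence; and $\LDS$ is treated identically, using the $\LDS$ part of Theorem~\ref{Thm: LD_uniform_LIS} and that $\LDS$ is the number of rows of the Robinson–Schensted shape.
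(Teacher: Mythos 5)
Your coupling $\Psi_c$ (cut the one-line word of a uniform $\pi$ into blocks and read each block as a cycle) is a valid alternative to the paper's Markov map $T$, and the deterministic Lipschitz bound $|\LIS(\Psi_c(\pi))-\LIS(\Psi_{c'}(\pi))|\le k+k'$ is correct. It cleanly gives exponential equivalence at speed $\sqrt n$, so the first half of the theorem (and the $\LDS$ version) is in good shape, modulo the routine upgrade from one-sided tail estimates to the full LDP. The upper bound at speed $n$, comparing $\gamma_n$ to $\sigma^{\rm Unif}_n$ via the factor $n$, is also correct and matches the paper.

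The genuine gap is exactly where you flag it: the lower bound for $\P(\LIS(\gamma_n)\le x\sqrt n)$ at speed $n$, and neither of your proposed repairs closes it. Your deterministic merging argument only controls the change of $\LIS$ by $O(\#(\sigma))$, and the unconditional cycle-count tail for $\sigma^{\rm Unif}_n$ is only super-exponential at scale $\epsilon^2 n$ (Bennett gives $\P(\#\ge\epsilon^2 n)=e^{-\omega(n)}$, but $\P(\#\ge c\log n)$ is merely polynomially small). So you can only afford to discard the event $\{\#\ge\epsilon^2 n\}$, and merging up to $\epsilon^2 n$ cycles deterministically moves $\LIS$ by $O(\epsilon^2 n)\gg\sqrt n$, which destroys the bound. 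Your alternative — that the uniform permutation \emph{conditioned} on $\LIS\le(x-\epsilon)\sqrt n$ has $O(\log n)$ cycles with probability $e^{-o(n)}$ — is a nontrivial conditional claim you do not establish, and the subsequent statement that the random merging ``produces uniform $n$-cycles up to subexponential factors'' is a second unverified counting step.

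The paper sidesteps both difficulties by replacing the deterministic Lipschitz bound with a \emph{probabilistic} one: Lemma~\ref{Lem:decomposition_LIS_ping} shows that
\[
\mathbb{P}\bigl(\LIS(T(\sigma))\le\LIS(\sigma)+k\bigr)\ \ge\ \inf_{\tau\in\mathfrak{S}_{\#(\sigma)}}\mathbb{P}\bigl(\LIS(\tau\circ\sigma^{\mathrm{Ew}}_{0,\#(\sigma)})<k\bigr),
\]
by tracking how much of an optimal increasing subsequence of $T(\sigma)$ passes through the $\#(\sigma)$ chosen merge points. Combined with Lemma~\ref{Lem:product_ewens_LIS}, this says that merging $m$ cycles typically changes $\LIS$ by $O(\sqrt m)$, not $O(m)$; hence if $\#(\sigma)\le\epsilon^2 n$ the change is $O(\epsilon\sqrt n)$ with probability $1-o(1)$, which is exactly the scale you need. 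This is the mechanism that the unconditional tail $\P(\#\ge\epsilon^2 n)=e^{-\omega(n)}$ can actually feed, and it yields inequality~\eqref{Ineq:lower_conjugacy_invariant} without any conditional cycle-count estimate. That probabilistic Lipschitz bound is the key idea your proposal is missing, and without it the speed-$n$ lower bound does not go through.
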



Even if Ewens measures do not satisfy  $\CI{\frac 12, 1}$ (except for $\theta=0)$ so that \eqref{eq : CI_LIS 1} is not guaranteed, we will show by a separate argument that universality still holds in the sense that the Ewens distribution still satisfies the same large deviation principle, see section \ref{cialphaewens}:

\begin{proposition} \label{Prop: Ewens_LIS}
For any $\theta\ge 0 $,  the sequence of random permutations  following the Ewens distribution  $\sigma_n\overset d= \sigma^{\mathrm{Ew}}_{\theta,n}$, $n\ge 1$, satisfies  the large deviation principle  \eqref{eq : CI_LIS 1}. 
\end{proposition}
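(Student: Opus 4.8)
The cases $\theta=1$ and $\theta=0$ are already settled: for $\theta=1$ one has $\sigma^{\mathrm{Ew}}_{1,n}=\sigma_n^{\mathrm{Unif}}$ and the statement \emph{is} Theorem~\ref{Thm: LD_uniform_LIS}, while for $\theta=0$ a uniformly random $n$-cycle has $\#(\sigma^{\mathrm{Ew}}_{0,n})=1$, hence trivially satisfies $\CI{\frac12,1}$, so that Theorem~\ref{Main_theorem_LIS} applies. The content is therefore the range $\theta\in(0,+\infty)\setminus\{1\}$, where Theorem~\ref{Main_theorem_LIS} is unavailable at speed $n$ because $\CI{\frac12,1}$ genuinely fails for the Ewens measure: its number of cycles reaches size $\varepsilon\sqrt n$ with probability of order $\exp(-\Theta(\sqrt n\ln n))=e^{-o(n)}$. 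The plan is to bypass this obstruction by comparing $\sigma^{\mathrm{Ew}}_{\theta,n}$ directly with $\sigma_n^{\mathrm{Unif}}$. Since $\P(\sigma_n^{\mathrm{Unif}}=\sigma)=1/n!$, formula \eqref{defEwens} exhibits the Ewens law as absolutely continuous with respect to the uniform one, with density
\[
D_n(\sigma)=\frac{\P(\sigma^{\mathrm{Ew}}_{\theta,n}=\sigma)}{\P(\sigma_n^{\mathrm{Unif}}=\sigma)}=\frac{n!\,\theta^{\#(\sigma)-1}}{\prod_{i=1}^{n-1}(\theta+i)}=\theta^{\#(\sigma)}\,\kappa_n,\qquad \kappa_n:=\frac{n!}{\theta\prod_{i=1}^{n-1}(\theta+i)},
\]
so that $\P(\sigma^{\mathrm{Ew}}_{\theta,n}\in A)=\E\!\left[\mathbbm{1}_{\{\sigma_n^{\mathrm{Unif}}\in A\}}\,D_n(\sigma_n^{\mathrm{Unif}})\right]$ for every $A\subseteq\kS_n$. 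As $\kappa_n=\Theta(n^{1-\theta})$ one has $\tfrac1n\ln\kappa_n\to0$, so this deterministic factor is invisible at speed $n$ and the only quantity to control is $\theta^{\#(\sigma)}$.

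I would then use two inputs. First, $\sigma_n^{\mathrm{Unif}}=\sigma^{\mathrm{Ew}}_{1,n}$ satisfies \eqref{eq : CI_LIS 1} with good rate function $J_{\LIS,1}$, i.e. Theorem~\ref{Thm: LD_uniform_LIS}. Second, for both laws the number of cycles is $o(n)$ with superexponentially small error: for every $\varepsilon>0$ and every $K>0$ we have $\P(\#(\sigma_n^{\mathrm{Unif}})\ge\varepsilon n)\le e^{-Kn}$ and $\P(\#(\sigma^{\mathrm{Ew}}_{\theta,n})\ge\varepsilon n)\le e^{-Kn}$ for all large $n$; for $\sigma_n^{\mathrm{Unif}}$ this is $\CI{1,1}$ (recalled in the introduction), and for $\sigma^{\mathrm{Ew}}_{\theta,n}$ it is the case $\alpha=1$ of Corollary~\ref{Cor:Ewens cycles} (equivalently, both follow from Markov's inequality applied to $\lambda^{\#(\cdot)}$, since $\E[\lambda^{\#(\sigma_n^{\mathrm{Unif}})}]$ and $\E[\lambda^{\#(\sigma^{\mathrm{Ew}}_{\theta,n})}]$ are polynomial in $n$ for each fixed $\lambda$, on letting $\lambda\to+\infty$).

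Because $J_{\LIS,1}$ is non-increasing on $(0,+\infty)$ and vanishes on $[2,+\infty)$, the large deviation upper bound reduces to showing that $\limsup_n\tfrac1n\ln\P(\LIS(\sigma^{\mathrm{Ew}}_{\theta,n})\le x\sqrt n)\le-I_{\LIS,1}(x)$ for every $x\in(0,2)$, and the lower bound reduces to the matching estimate on small intervals $(x-\delta,x+\delta)$ with $x\in(0,2)$; the contribution of points in $[2,+\infty)$ comes for free, since any open set meeting $[2,+\infty)$ has probability at least $e^{-o(\sqrt n)}=e^{-o(n)}$ by the lower bound in the speed-$\sqrt n$ principle of Theorem~\ref{Main_theorem_LIS} (valid because $\sigma^{\mathrm{Ew}}_{\theta,n}$ satisfies $\CI{\frac12,\frac12}$) together with $I_{\LIS,\frac12}(2)=0$. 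Now fix $x\in(0,2)$. For the \emph{upper bound}: if $0<\theta\le1$ then $\theta^{\#(\sigma)}\le1$, so $D_n\le\kappa_n$ and $\P(\LIS(\sigma^{\mathrm{Ew}}_{\theta,n})\le x\sqrt n)\le\kappa_n\,\P(\LIS(\sigma_n^{\mathrm{Unif}})\le x\sqrt n)$, and Theorem~\ref{Thm: LD_uniform_LIS} yields the bound; if $\theta>1$, split the event $\{\LIS\le x\sqrt n\}$ according to whether $\#(\sigma)<\varepsilon n$ or not, bounding $D_n\le\theta^{\varepsilon n}\kappa_n$ on the first part and the probability of the second by $\P(\#(\sigma^{\mathrm{Ew}}_{\theta,n})\ge\varepsilon n)\le e^{-Kn}$, so that on taking $\limsup_n\tfrac1n\ln$, then $K\to+\infty$, then $\varepsilon\downarrow0$, and using Theorem~\ref{Thm: LD_uniform_LIS}, one gets $\limsup_n\tfrac1n\ln\P(\LIS(\sigma^{\mathrm{Ew}}_{\theta,n})\le x\sqrt n)\le-I_{\LIS,1}(x)$. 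For the \emph{lower bound}: if $\theta\ge1$ then $\theta^{\#(\sigma)}\ge1$, so $D_n\ge\kappa_n$ and $\P(\LIS(\sigma^{\mathrm{Ew}}_{\theta,n})/\sqrt n\in E)\ge\kappa_n\,\P(\LIS(\sigma_n^{\mathrm{Unif}})/\sqrt n\in E)$ for every open $E$, and Theorem~\ref{Thm: LD_uniform_LIS} concludes; if $0<\theta<1$, restrict $\E[\mathbbm{1}_{\{\LIS/\sqrt n\in(x-\delta,x+\delta)\}}D_n]$ to $\{\#(\sigma)<\varepsilon n\}$, where $D_n\ge\theta^{\varepsilon n}\kappa_n$, and use that $\P(\#(\sigma_n^{\mathrm{Unif}})\ge\varepsilon n)\le e^{-Kn}$ with $K$ chosen large is negligible next to $\P(\LIS(\sigma_n^{\mathrm{Unif}})/\sqrt n\in(x-\delta,x+\delta))\ge e^{-n(I_{\LIS,1}(x)+o(1))}$; this gives $\liminf_n\tfrac1n\ln\P(\LIS(\sigma^{\mathrm{Ew}}_{\theta,n})/\sqrt n\in(x-\delta,x+\delta))\ge\varepsilon\ln\theta-I_{\LIS,1}(x)$, whence the claim on letting $\varepsilon\downarrow0$. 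The statement for $\LDS$ is obtained in exactly the same way.

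The only genuine obstacle is the upper bound when $\theta>1$: there $D_n$ could a priori be as large as $\theta^{n}\kappa_n$, which would shift the exponential rate by $\ln\theta$ and destroy the identification with $I_{\LIS,1}$. What rescues the truncation argument — and is really the heart of the proposition — is precisely the superexponential control $\P(\#(\sigma^{\mathrm{Ew}}_{\theta,n})\ge\varepsilon n)\le e^{-Kn}$, that is, the fact that the Ewens measures, although they fail $\CI{\frac12,1}$, do satisfy $\CI{1,1}$: this allows one to discard the event $\{\#(\sigma)\ge\varepsilon n\}$ at speed $n$ and then let $\varepsilon\downarrow0$. Everything else is routine large deviation bookkeeping: monotonicity and continuity of $I_{\LIS,1}$ on $(0,2]$, the reduction of the bounds to half-lines and small intervals, and the elementary inequality $\limsup_n\tfrac1n\ln(a_n+b_n)\le\max(\limsup_n\tfrac1n\ln a_n,\ \limsup_n\tfrac1n\ln b_n)$.
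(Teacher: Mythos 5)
Your proof is correct, and it takes a genuinely different route from the paper. Both arguments reduce the problem to the lower tail at speed $n$, but you and the paper transport the estimate differently. You exploit the explicit Radon--Nikodym derivative $D_n(\sigma)=\theta^{\#(\sigma)}\kappa_n$ of the Ewens law with respect to the \emph{uniform} law, truncate on $\{\#(\sigma)<\varepsilon n\}$ where $D_n$ is exponentially tame, control the complement by the superexponential decay of cycle counts (Corollary~\ref{Cor:Ewens cycles} with $\alpha=1$), and then invoke Theorem~\ref{Thm: LD_uniform_LIS} directly; this bypasses the Markov-chain machinery entirely. The paper instead routes through the \emph{cyclic} Ewens law $\sigma^{\mathrm{Ew}}_{0,n}$: for the lower bound it restricts to the event $\{\#(\sigma^{\mathrm{Ew}}_{\theta,n})=1\}$, whose conditional law is exactly $\sigma^{\mathrm{Ew}}_{0,n}$ and whose probability $\Gamma(\theta)\Gamma(n)/\Gamma(n+\theta)$ is only polynomial, then cites the already-proved estimate \eqref{Eq:Ewens_lower}; for the upper bound it applies the comparison inequality \eqref{Ineq:lower_conjugacy_invariant} — itself derived from the cycle-merging Markov chain $T$ and Lemma~\ref{Lem:decomposition_LIS_ping} — with $\sigma_n=\sigma^{\mathrm{Ew}}_{\theta,n}$, again using $\CI{1,1}$ to absorb the cycle-count restriction. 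What your method buys is elementary self-containedness: one explicit density, one truncation, and Theorem~\ref{Thm: LD_uniform_LIS} suffice, with no need for Lemma~\ref{Lemma: coupling}, Lemma~\ref{Lem:decomposition_LIS_ping}, or \eqref{Ineq:lower_conjugacy_invariant}. What the paper's method buys is uniformity with the general $\CI{\alpha,\beta}$ framework it has just developed (absolute continuity with respect to the uniform law is a special feature of Ewens and would not be available for an arbitrary conjugacy-invariant family), and a slightly cleaner lower bound since the restriction to a single cycle avoids the $\theta^{\varepsilon n}$ factor and the subsequent $\varepsilon\downarrow 0$ step. Incidentally, the displayed formula in the paper's lower-bound argument writes $-I_{\LIS,\frac12}(x)$ where it should read $-I_{\LIS,1}(x)$, consistent with \eqref{Eq:Ewens_lower}; your statement has the correct rate.
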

\subsection{Strategy of the proofs}\label{tech}
In this section, we introduce our main technical trick, namely Theorem ~\ref{Prob:Meta theorem}, which can be thought as an exponential approximation argument as presented in \cite[Section 4.2.2]{DZ}. To this end, we see the statistic under study, such as $\LIS$,  as a function $f$ of the permutation. Our point is then that if this function $f$ depends sufficiently smoothly on the permutation in the sense of Hypothesis $\mathcal{H}_1$ and the law of $n^{-\alpha}f(\sigma_n^{\text{Unif}})$ satisfies a large deviation principle with speed $n^\beta$, then this large deviation principle remains true for all the random  permutations satisfying $\CI{\alpha,\beta}$. We then show that many well known statistics verify hypothesis  $\mathcal{H}_1$ and then apply our result to various examples. 

The goal of this section is to introduce a tool to prove universality  for the large and moderate deviations bounds   for some statistics.  We first define for a permutation $\sigma\in\mathfrak{S}_n$,  the set
\begin{equation*}
    A_\sigma =  \begin{cases} 
    \{\sigma\} & \text{if } \#(\sigma)=1\\ 
      \{\rho \in \mathfrak{S}_n, \rho=\sigma\circ(i_1,i_2)\circ(i_1,i_3)\dots\circ(i_1,i_{\#(\sigma)}) \text{ and } \#(\rho)=1 \} &\text{if } \#(\sigma)>1 
    \end{cases}.
\end{equation*}
Above, $\{i_1
,\ldots, i_{\#\sigma}\}$ are distinct elements of $\{1,\ldots,n\}$. 

For example: $A_{(3,4)} = \{(1,2,3,4), (1,2,4,3), (1,3,4,2), (1,4,3,2) \}$.
Indeed, $(1,2,3,4)=(3,4)(4,2)(4,1)$, 
$(1,2,4,3)=(3,4)(3,2)(3,1)$, etc.

Our main theorem states that large deviations are universal within the class $\CI{\alpha,\beta}$.

\begin{theorem}
 \label{Prob:Meta theorem}
Let $f$ be a function defined on $\bigcup_{n\ge 1} \mathfrak{S}_n$ and having values in $\mathbb{R}^d$ for some fixed integer number $d$. We denote $|.|$ the Euclidean distance in $\mathbb R^d$. 
Suppose that 
\begin{itemize}
    \item the following holds true
    \begin{equation} \label{Eq:function_control} \tag{$\mathcal{H}_1$} 
   \sup_{\sigma\in \cup_{n\geq 1}\mathfrak{S}_n} \sup_{\rho\in A_{\sigma}}   \frac{|f(\sigma)-f(\rho)|}{\#(\sigma)} <+\infty. 
    \end{equation}
     
\item there exists some function $J$ \textbf{continuous},  some $0<\beta\leq \alpha<1$ and  $\mathcal{I}$ an open set of  $\mathbb{R}^d$ such that  for any $x\in \mathcal{I}$
\begin{equation} \label{Eq:LDH} \tag{$\mathcal{H}_2$} 
    \lim_{n\to\infty}
\frac{1}{n^\beta} \ln{\P\(f\(\sigma^{\text{Unif}}_{n}\) \ge xn^\alpha\)}= -J(x).
\end{equation}
The relation $a=(a_1,\dots,a_d)\geq  b=(b_1,\dots,b_d)$ means $a_i\geq b_i$ for all $i\in [d]$.  
\item  Moreover, the sequence $(\sigma_n)_{n\ge}$ of random permutations satisfies 
    $\CI{\alpha,\beta}$
\end{itemize}
Then,  for any $x\in \mathcal{I}$
\begin{equation*}
    \lim_{n\to\infty}
\frac{1}{n^\beta} \ln{\P(f(\sigma_{n}) \ge xn^\alpha)}= -J(x).
\end{equation*}
\end{theorem}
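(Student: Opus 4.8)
The plan is an exponential‑approximation argument in the spirit of \cite[Section~4.2.2]{DZ}: couple $\sigma_n$ with a uniform $n$‑cycle along which $f$ barely moves --- the discrepancy being at most $C\,\#(\sigma_n)$, where $C<\infty$ denotes the supremum in \eqref{Eq:function_control} --- and then use $\CI{\alpha,\beta}$ to neglect the event $\{\#(\sigma_n)>\varepsilon n^\alpha\}$, whose probability is super‑exponentially small on the scale $n^\beta$. The uniform $n$‑cycle serves as a common reference: its law, hence the law of $f$ evaluated on it, is the same whatever conjugacy invariant permutation we started from. Comparing the uniform permutation of \eqref{Eq:LDH} to this reference, and then the reference to $\sigma_n$, transfers the estimate.

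\textbf{Coupling step.} I would first show that \emph{any} conjugacy invariant permutation $\tau_n$ of size $n$ can be coupled with a uniform $n$‑cycle $c_n$ so that $c_n\in A_{\tau_n}$ almost surely. Conditionally on the cycle type $\lambda$ of $\tau_n$, the permutation $\tau_n$ is uniform on the conjugacy class $C_\lambda$. Form the bipartite graph on $C_\lambda\sqcup\{n\text{-cycles}\}$ with an edge between $\sigma$ and $c$ whenever $c\in A_\sigma$; it has no isolated vertex, as $A_\sigma\neq\emptyset$ for every $\sigma$ (the star transpositions in the definition of $A_\sigma$ always merge the cycles of $\sigma$ into a single one). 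Since conjugation by $\rho$ maps $A_\sigma$ bijectively onto $A_{\rho\sigma\rho^{-1}}$, while conjugation acts transitively on $C_\lambda$ and on the set of $n$‑cycles, this graph is biregular; hence picking $\sigma$ uniformly in $C_\lambda$ and then $c_n$ uniformly in $A_\sigma$ produces a uniform $n$‑cycle, and integrating over $\lambda$ gives the coupling. By \eqref{Eq:function_control} it satisfies $|f(\tau_n)-f(c_n)|\le C\,\#(\tau_n)$.

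\textbf{Sandwich and bootstrap.} Fix $x\in\mathcal I$ and $\varepsilon>0$ small enough that $x\pm C\varepsilon\One\in\mathcal I$ ($\mathcal I$ being open). For a conjugacy invariant $\tau_n$ with coupled $c_n$, on the event $\{\#(\tau_n)\le\varepsilon n^\alpha\}$ one has $|f(\tau_n)-f(c_n)|\le C\varepsilon n^\alpha$, hence coordinatewise $f(c_n)\ge f(\tau_n)-C\varepsilon n^\alpha\One$ and $f(\tau_n)\ge f(c_n)-C\varepsilon n^\alpha\One$; combined with the elementary inequalities $\P(A)\le\P(A\cap B)+\P(B^{c})$ and $\P(A)\ge\P(A')-\P(B^{c})$ (the latter when $A'\cap B\subseteq A$), this yields
\begin{equation*}
\P\big(f(c_n)\ge (x+C\varepsilon\One)n^\alpha\big)-\P\big(\#(\tau_n)>\varepsilon n^\alpha\big)\ \le\ \P\big(f(\tau_n)\ge xn^\alpha\big)\ \le\ \P\big(f(c_n)\ge (x-C\varepsilon\One)n^\alpha\big)+\P\big(\#(\tau_n)>\varepsilon n^\alpha\big).
\end{equation*}
I would apply this first with $\tau_n=\sigma^{\text{Unif}}_n$, which is conjugacy invariant and, being the Ewens measure of parameter $1$, satisfies $\CI{\alpha,\alpha}$ and hence $\CI{\alpha,\beta}$ since $\beta\le\alpha$ (Corollary~\ref{Cor:Ewens cycles}), so that $n^{-\beta}\ln\P(\#(\sigma^{\text{Unif}}_n)>\varepsilon n^\alpha)\to-\infty$; feeding in \eqref{Eq:LDH}, dividing by $n^\beta$, letting $n\to\infty$ and then $\varepsilon\to0$ (using continuity of $J$) gives $n^{-\beta}\ln\P(f(c_n)\ge yn^\alpha)\to -J(y)$ for every $y\in\mathcal I$, i.e.\ the desired estimate for the uniform $n$‑cycle. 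Applying the sandwich a second time with $\tau_n=\sigma_n$ (which satisfies $\CI{\alpha,\beta}$ by assumption) and this estimate for $c_n$, the same two limits give $n^{-\beta}\ln\P(f(\sigma_n)\ge xn^\alpha)\to -J(x)$ for every $x\in\mathcal I$, which is the claim.

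\textbf{Main difficulty.} The delicate point is the biregularity of the bipartite graph in the coupling step: that $|A_\sigma|$ depends only on the cycle type of $\sigma$, and that each fixed $n$‑cycle lies in $A_\sigma$ for a number of $\sigma\in C_\lambda$ independent of the $n$‑cycle. Both reduce to the equivariance $A_{\rho\sigma\rho^{-1}}=\rho A_\sigma\rho^{-1}$ together with transitivity of conjugation on each conjugacy class, but need care. Everything else is bookkeeping: propagating the coordinatewise vector inequalities, checking that the ``bad‑event'' probabilities disappear after dividing $\ln(\cdot)$ by $n^\beta$, and carrying out the two successive limits $\varepsilon\to0$.
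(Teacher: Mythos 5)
Your proof is correct and takes the same route as the paper: the sandwich inequality you write is exactly the paper's display \eqref{eq:controle LIS-Ew}, and the two successive applications (first to the uniform permutation to pass from $\sigma^{\text{Unif}}_n$ to the uniform $n$-cycle $\sigma^{\mathrm{Ew}}_{0,n}$, then to a general $\sigma_n$ satisfying $\CI{\alpha,\beta}$) are the paper's argument verbatim. The coupling you construct via the biregular bipartite graph on $C_\lambda \sqcup \{n\text{-cycles}\}$ and $\mathfrak{S}_n$-equivariance of $\sigma\mapsto A_\sigma$ is precisely the content of Lemma~\ref{Lemma: coupling}, which the paper cites from \cite{kam2} (where it is proved by the same conjugacy-invariance argument) rather than reproving.
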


It may appear challenging to verify \eqref{Eq:function_control} for an arbitrary statistic. However, in some cases, it is more straightforward to prove the following condition:
\begin{equation}
\label{Eq:function_control_2}
\tag{$\mathcal{H}_1'$}
\sup_{\sigma\in \cup_{n\geq 1}\mathfrak{S}n} \sup_{i,j } {|f(\sigma)-f(\sigma\circ(i,j))|} <+\infty.
\end{equation}

A noteworthy observation is that \eqref{Eq:function_control_2} implies \eqref{Eq:function_control}, as it directly follows from the triangle inequality.
\begin{lemma}\label{RK:vector space} The set of functions satisfying  $\eqref{Eq:function_control}$
(resp. \eqref{Eq:function_control_2} ) is a vector space.
\end{lemma}

We next show that Hypothesis $\eqref{Eq:function_control_2}$ is satisfied for a large class of well-known statistics. Hence, Theorem ~\ref{Prob:Meta theorem} implies the universality of the large deviations for these statistics as soon as they are known for the uniform measure. We prove the following Property in subsection~\ref{subsection:proof_remark}.
\begin{property} \label{RQ:controle_functions}
The following functions satisfy $\eqref{Eq:function_control_2}$ :
\begin{enumerate}
    \item The longest increasing subsequence  $\LIS$.
    \item  The longest decreasing subsequence $\LDS$.
    \item  The vector $(\lambda_1,\dots,\lambda_d)$ if $\lambda_i$  denotes the length of the $i^{\text{th}}$ row of the  RSK image for $i\in [d]$. $d$ is a fixed integer number. 
    \item The normalized Inversions count : $\frac{\mathrm{Inv}(\sigma)}{n}=\frac{\mathrm{card(\{ (i,j) : i<j, \sigma(i)>\sigma(j)  \})}}{n}$.
        \item The descents count : $D(\sigma):=\mathrm{card}\{ i: \sigma(i+1) <\sigma(i)  \}$.
                \item The ascents count : $A(\sigma):=\mathrm{card}\{ i: \sigma(i+1) >\sigma(i)  \}$.
        
            \item The peaks count :
$\mathrm{Peaks}(\sigma)=\mathrm{card}\{ i: \sigma(i-1) <\sigma(i) > \sigma(i+1)  \}$.
\item The valleys count :
$\mathrm{Valleys}(\sigma)=\mathrm{card}\{ i: \sigma(i-1) >\sigma(i) < \sigma(i+1)  \}$.
\item The exceedance count  : $\mathrm{Exc}(\sigma):=\mathrm{card}\{ i: \sigma(i) >i  \}$.  \footnote{ For the specialists, in general, the normalized  number of occurrences of any set of classical (inversions for example), consecutive (descents, ascents, double descents, peaks, valleys, etc.  )  or more generally (bi-)vincular pattern.}
        \item The normalized major index:  $\frac{\mathrm{Maj}(\sigma) }{n}=\frac{\sum_{\sigma(i+1)>\sigma(i)} i  }{n}$.
        \item The longest alternating subsequence $\LAS$: Let $1\le i_1<i_2<\cdots<i_k\le$. 
        We say that $\sigma(i_1),\dots,\sigma(i_k)$ is an alternating subsequence of length $k$ if
        $\sigma(i_1)<\sigma(i_2)>\sigma(i_3)<\sigma(i_4) \cdots$ and let $\LAS$ be the length of the longest alternating subsequence.
\end{enumerate}
\end{property}
In (iii), $\lambda(\sigma)=(\lambda_i(\sigma))_{i\geq 1}$ denotes the RSK shape associated with the permutation $\sigma$, see a definition in \cite[Section 1.6]{MR3468738}

\subsection{Applications}
\subsubsection{Applications to the longest increasing subsequence.}

A direct application of Theorem ~\ref{Prob:Meta theorem} gives the upper large deviations \eqref{eq : CI_LIS 1/2} of Theorem~\ref{Main_theorem_LIS}. 

\begin{corollary}\label{Thm:LDS_right_eq}
    If the sequence $(\sigma_n)_{n\ge 1}$ of random permutations  satisfies 
    $\CI{\frac 12,\frac 12}$,   then for any $x>2$ 
    \begin{eqnarray*}
\lim_{n\to\infty}
\frac{1}{\sqrt n} \ln{\P(\LIS(\sigma_n) \ge x\sqrt{n})}&= &-I_{\LIS,\frac 12}(x)\\
\lim_{n\to\infty}
\frac{1}{\sqrt n} \ln{\P(\LDS(\sigma_n) \ge x\sqrt{n})}&=&
-I_{\LIS,\frac 12}(x).
\end{eqnarray*}
\end{corollary}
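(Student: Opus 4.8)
The plan is to apply Theorem~\ref{Prob:Meta theorem} directly, with $d=1$, to the two statistics $f=\LIS$ and $f=\LDS$, choosing parameters $\alpha=\beta=\frac12$. First I would check Hypothesis \eqref{Eq:function_control}: by Property~\ref{RQ:controle_functions}(i)--(ii), both $\LIS$ and $\LDS$ satisfy the stronger condition \eqref{Eq:function_control_2}, and as noted right after the statement of the theorem, \eqref{Eq:function_control_2} implies \eqref{Eq:function_control} via the triangle inequality (writing an element $\rho\in A_\sigma$ as a product $\sigma\circ(i_1,i_2)\circ\cdots\circ(i_1,i_{\#(\sigma)})$ of $\#(\sigma)-1$ transpositions and telescoping). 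So $\mathcal{H}_1$ holds for both functions.

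Next I would verify Hypothesis \eqref{Eq:LDH}. Take $\mathcal{I}=(2,+\infty)$, which is open in $\mathbb{R}$, and $J=I_{\LIS,\frac12}$, which is continuous on $\mathcal{I}$ (indeed on all of $[2,\infty)$). The first display of Theorem~\ref{Thm: LD_uniform_LIS} states exactly that for any $x\ge 2$,
\[
\lim_{n\to\infty}\frac{1}{\sqrt n}\ln\P\left(\LIS(\sigma_n^{\mathrm{Unif}})\ge x\sqrt n\right)=-I_{\LIS,\frac12}(x),
\]
which is \eqref{Eq:LDH} with $\alpha=\beta=\frac12$. For the $\LDS$ statement I would use that $\LDS(\sigma)=\LIS(\sigma\circ w_0)$ where $w_0$ is the reversal permutation $i\mapsto n+1-i$; since $\sigma_n^{\mathrm{Unif}}\circ w_0\overset d=\sigma_n^{\mathrm{Unif}}$, the statistic $\LDS(\sigma_n^{\mathrm{Unif}})$ has the same law as $\LIS(\sigma_n^{\mathrm{Unif}})$, so \eqref{Eq:LDH} holds for $\LDS$ with the same rate function $I_{\LIS,\frac12}$ on $\mathcal{I}=(2,\infty)$.

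Finally, the hypothesis $\CI{\frac12,\frac12}$ on $(\sigma_n)_{n\ge1}$ is assumed in the statement of the corollary, so the third bullet of Theorem~\ref{Prob:Meta theorem} is met. Applying the theorem to $f=\LIS$ gives $\lim_n \frac{1}{\sqrt n}\ln\P(\LIS(\sigma_n)\ge x\sqrt n)=-I_{\LIS,\frac12}(x)$ for every $x\in(2,\infty)$, and applying it to $f=\LDS$ gives the second identity. There is essentially no obstacle here: the corollary is a bookkeeping consequence of the meta-theorem together with the known uniform large deviation result, the content of Property~\ref{RQ:controle_functions}, and the elementary reversal symmetry relating $\LDS$ to $\LIS$. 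The only minor care needed is to make sure the symmetry argument for $\LDS$ respects conjugacy invariance — but since we only use it on the \emph{uniform} permutation (to transfer \eqref{Eq:LDH}), not on $\sigma_n$ itself, this is immediate.
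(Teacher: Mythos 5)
Your proof is correct and follows the same route as the paper: apply Theorem~\ref{Prob:Meta theorem} with $f=\LIS$ (resp.\ $f=\LDS$), $\alpha=\beta=\tfrac12$, checking $\eqref{Eq:function_control}$ via Property~\ref{RQ:controle_functions} and $\eqref{Eq:LDH}$ via Theorem~\ref{Thm: LD_uniform_LIS}, and transferring the uniform large deviation result to $\LDS$ by the reversal symmetry $\LDS(\sigma)=\LIS(\sigma\circ w_0)$ under which the uniform law is invariant. Your closing remark --- that the reversal symmetry is only needed at the level of the \emph{uniform} permutation to verify $\eqref{Eq:LDH}$, so the fact that $\sigma\mapsto\sigma\circ w_0$ does not preserve conjugacy classes causes no trouble --- is a useful clarification that the paper leaves implicit.
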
 
\begin{proof}
The first point  is a direct application  of Theorem~\ref{Prob:Meta theorem} by taking $f=\LIS$  and $\alpha=\beta=\frac{1}{2}$.  The hypothesis \eqref{Eq:function_control} is satisfied thanks to  Property~\ref{RQ:controle_functions},  
and \eqref{Eq:LDH} is satisfied thanks to  Theorem~\ref{Thm: LD_uniform_LIS}. Moreover, when $\sigma_n$ follows the uniform law, the LDS has the same distribution as the LIS. This can be seen for instance by replacing the collection $\{i\}$ into $\{n-i\}$.The proof of the large deviation principle for the function $\LDS$ is therefore the same as for the function $\LIS$. 

\end{proof} 
For the upper moderate deviation, we find the following universal result:
\begin{corollary}    
Let $ \frac{1}{6}<\nu <\frac{1}{2}$. If  the sequence $(\sigma_n)_{n\ge 1}$ of random permutations satisfies 
    $\CI{ \nu,\frac{3\nu}{2}-\frac{1}{4}}$,   then for any $ x>0$,
    \[
\lim_{n\to\infty}
\frac{1}{n^{\frac{3\nu}{2}-\frac{1}{4}}} \ln{\P(\LIS(\sigma_n) \ge 2\sqrt{n}+  xn^{\nu}} )= -\frac{4}{3} x^{\frac{3}{2}}.\]
\end{corollary}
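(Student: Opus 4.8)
The plan is to obtain this as a direct consequence of Theorem~\ref{Prob:Meta theorem}, exactly as in the proof of Corollary~\ref{Thm:LDS_right_eq}, after first establishing the corresponding moderate deviation result for the uniform permutation. Concretely, I would set $f=\LIS$, $\alpha=\nu$ and $\beta=\tfrac{3\nu}{2}-\tfrac14$; note that the hypothesis $\tfrac16<\nu<\tfrac12$ is precisely what guarantees $0<\beta\le\alpha<1$ (the inequality $\beta\le\alpha$ reduces to $\nu\le\tfrac12$, and $\beta>0$ to $\nu>\tfrac16$), so that Theorem~\ref{Prob:Meta theorem} is applicable. The function $f=\LIS$ satisfies \eqref{Eq:function_control} by Property~\ref{RQ:controle_functions}(i) together with the observation that \eqref{Eq:function_control_2} implies \eqref{Eq:function_control}, and the sequence $(\sigma_n)$ satisfies $\CI{\nu,\tfrac{3\nu}{2}-\tfrac14}$ by assumption. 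So the only genuine work is to verify \eqref{Eq:LDH} for $f=\LIS$ with this scaling, i.e. to show that for every $x>0$,
\begin{equation*}
\lim_{n\to\infty}\frac{1}{n^{3\nu/2-1/4}}\ln\P\!\left(\LIS(\sigma^{\text{Unif}}_n)\ge 2\sqrt n+xn^\nu\right)=-\frac43 x^{3/2}.
\end{equation*}
Note that here $f$ must be centred/rescaled appropriately: one applies Theorem~\ref{Prob:Meta theorem} to $\tilde f(\sigma):=\LIS(\sigma)-2\sqrt n$, which still satisfies \eqref{Eq:function_control} since the additive deterministic shift cancels in $f(\sigma)-f(\rho)$, and then $\tilde f(\sigma^{\text{Unif}}_n)/n^\nu$ is the relevant statistic with $\alpha=\nu$.

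The heart of the matter is therefore the moderate deviation estimate for the uniform permutation, which is the regime interpolating between the CLT fluctuations of order $n^{1/6}$ (Baik--Deift--Johansson) and the large deviation regime at scale $\sqrt n$ from Theorem~\ref{Thm: LD_uniform_LIS}. The rate $\tfrac43 x^{3/2}$ and speed $n^{3\nu/2-1/4}$ are exactly what one gets by matching: the Tracy--Widom tail $\P(\chi_{\mathrm{TW}}>t)\sim\exp(-\tfrac43 t^{3/2})$ with the identification $t\sim$ (deviation in units of $n^{1/6}$) $=xn^\nu/n^{1/6}=xn^{\nu-1/6}$, giving $\tfrac43(xn^{\nu-1/6})^{3/2}=\tfrac43 x^{3/2}n^{3\nu/2-1/4}$. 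Rigorously this should follow from the known moderate deviation principle / precise right-tail asymptotics for $\LIS(\sigma^{\text{Unif}}_n)$; the sharpest route is to invoke the determinantal/Riemann--Hilbert estimates on the Poissonized length (Baik--Deift--Johansson, and the right-tail estimates of Löwe--Merkl or Sepp\"al\"ainen and subsequent refinements), together with de-Poissonization, to get uniform-in-$x$ control on compact subsets of $(0,\infty)$. I would cite the relevant moderate-deviations statement rather than reprove it, since it is standard in the integrable-probability literature, and then feed it into Theorem~\ref{Prob:Meta theorem}.

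The main obstacle I anticipate is purely bookkeeping on the regime of validity: one must make sure that the moderate deviation asymptotics for the uniform model hold on the \emph{open} interval $\nu\in(\tfrac16,\tfrac12)$ with a limit that is continuous in $x$ on $\mathcal I=(0,\infty)$ (the continuity of $J(x)=\tfrac43 x^{3/2}$ is clear), and that the speed $n^{\beta}$ with $\beta=\tfrac{3\nu}{2}-\tfrac14$ indeed lies in $(0,1)$ and satisfies $\beta\le\alpha=\nu$ — both of which are exactly the stated constraints on $\nu$. One subtle point worth checking is the endpoint behaviour: as $\nu\downarrow\tfrac16$ the speed degenerates ($\beta\downarrow0$) and as $\nu\uparrow\tfrac12$ one recovers $\beta=\tfrac12$ and $J(x)=\tfrac43x^{3/2}$, which should be consistent with the small-$x$ expansion $I_{\LIS,1/2}(2+x)\approx\tfrac43 x^{3/2}$ of the large deviation rate in Theorem~\ref{Thm: LD_uniform_LIS} — a sanity check rather than a step of the proof. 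Apart from that, the argument is a verbatim application of the meta-theorem, so the proof is short once the uniform-case input is in hand.
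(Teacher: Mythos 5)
Your proposal follows the paper's own proof essentially verbatim: cite the uniform-case moderate deviation result of the reference (the paper cites it as MD\_LIS\_upper), then apply Theorem~\ref{Prob:Meta theorem} with $\alpha=\nu$, $\beta=\tfrac{3\nu}{2}-\tfrac14$, checking $\beta\le\alpha<1$ and invoking Property~\ref{RQ:controle_functions} for \eqref{Eq:function_control}. You are actually a bit more careful than the paper in one respect: you explicitly note that one should apply the meta-theorem to the centred statistic $\tilde f(\sigma)=\LIS(\sigma)-2\sqrt{n}$ rather than to $\LIS$ itself, and that the deterministic shift cancels in $f(\sigma)-f(\rho)$ so \eqref{Eq:function_control} is preserved — a point the paper glosses over.
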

This result was proven in the uniform case in \cite{MD_LIS_upper}. Our universality result therefore follows from  Theorem~\ref{Prob:Meta theorem} since the coefficients $(\alpha,\beta)= (\nu, \frac{3\nu}{2}-\frac{1}{4})$ satisfy $\beta=\frac{3\nu}{2}-\frac{1}{4} \le \nu=\alpha$ for every  $\nu\le 1/2$ so that  \eqref{Eq:LDH} is satisfied. 
\eqref{Eq:function_control} follows from Property~\ref{RQ:controle_functions}.

\subsubsection{Applications to large deviations for the Eulerian statistics}
Let $D$ be the number of descents in $\sigma$. 
\begin{definition}
A function $f$ is called an
Eulerian statistic if $f(\sigma^{\mathrm{Unif}}_n) \overset{d}{=} D(\sigma^{\mathrm{Unif}}_n) $ for every integer number $n$.
    
\end{definition}
It is known, for example that  the exceedances count $\mathrm{Exc}$ and the ascents count  $A$ are Eulerian. Note that the equality in distribution is not true for general conjugacy invariant permutations.  


\begin{corollary}    
    If the sequence of random permutations $(\sigma_n)_{n\ge 1}$ satisfies 
    $\CI{ 1,1}$,   then for $f \in \{D,A,\mathrm{Exc}\}$, for any $ \frac{1}2<x<1$, 
    \[
\lim_{n\to\infty}
\frac{1}{n} \ln{\P(f(\sigma_n) \ge x{n})}= 
-I_{D}(x).
\]
 Where $I_{D}(x)=\sup_t\{ xt-\ln(\frac{\exp(t)-1}t)\}.$
\end{corollary}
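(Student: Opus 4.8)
The plan is to apply Theorem~\ref{Prob:Meta theorem} directly, once the three hypotheses are checked. We take $d=1$, $\alpha=\beta=1$, and $f$ to be one of $D$, $A$, or $\mathrm{Exc}$. First I would verify \eqref{Eq:function_control}: by Property~\ref{RQ:controle_functions} items (v), (vi), (ix), each of $D$, $A$, $\mathrm{Exc}$ satisfies the stronger condition \eqref{Eq:function_control_2}, which implies \eqref{Eq:function_control} by the triangle inequality as noted after the statement of the theorem. Second, I would check \eqref{Eq:LDH} for $f(\sigma^{\mathrm{Unif}}_n)$: since $f$ is an Eulerian statistic, $f(\sigma^{\mathrm{Unif}}_n)\overset d= D(\sigma^{\mathrm{Unif}}_n)$, and the descent count of a uniform permutation of size $n$ has the same law as $\sum_{i=1}^{n}\mathbbm 1_{U_i\ge U_{i+1}}$-type sums; more precisely, it is classical (via the Eulerian numbers generating function, or via the representation of $D$ through i.i.d. uniforms and the Euler–Frobenius formula) that $D(\sigma^{\mathrm{Unif}}_n)/n$ satisfies a large deviation principle at speed $n$ with rate function $I_D(x)=\sup_t\{xt-\ln((e^t-1)/t)\}$ on $(0,1)$. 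In particular, for every $x\in(1/2,1)$,
\[
\lim_{n\to\infty}\frac1n\ln\P\big(D(\sigma^{\mathrm{Unif}}_n)\ge xn\big)=-I_D(x),
\]
and $I_D$ is continuous on the open interval $\mathcal I=(1/2,1)$ (indeed on all of $(0,1)$), so \eqref{Eq:LDH} holds with $\mathcal I=(1/2,1)$. Third, the hypothesis $\CI{1,1}$ on $(\sigma_n)_{n\ge1}$ is assumed in the statement.

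With all three hypotheses of Theorem~\ref{Prob:Meta theorem} in place, its conclusion gives, for every $x\in(1/2,1)$,
\[
\lim_{n\to\infty}\frac1n\ln\P\big(f(\sigma_n)\ge xn\big)=-I_D(x),
\]
for each $f\in\{D,A,\mathrm{Exc}\}$, which is exactly the claim. One subtlety I would be careful about: the equality in distribution $f(\sigma^{\mathrm{Unif}}_n)\overset d=D(\sigma^{\mathrm{Unif}}_n)$ is used \emph{only} for the uniform permutation to transfer the large deviation input \eqref{Eq:LDH}; it is emphatically \emph{not} assumed for the general $\sigma_n$ satisfying $\CI{1,1}$, and the point of the theorem is precisely that the large deviation upper bound nevertheless survives the passage to an arbitrary conjugacy-invariant model with few cycles.

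The only genuine obstacle is the large deviation statement for $D(\sigma^{\mathrm{Unif}}_n)$ itself, i.e. establishing \eqref{Eq:LDH} for the uniform case; everything else is bookkeeping. I would handle this by the standard route: the exponential generating function $\sum_n \E\big[z^{D(\sigma^{\mathrm{Unif}}_n)}\big]\frac{u^n}{n!}$ has the classical closed form, from which one extracts that the normalized cumulant generating function $\frac1n\ln\E\big[e^{tD(\sigma^{\mathrm{Unif}}_n)}\big]$ converges to $\Lambda(t)=\ln\big((e^t-1)/t\big)$ for all real $t$; since $\Lambda$ is finite and differentiable everywhere, the Gärtner–Ellis theorem (see \cite{DZ}) yields the full large deviation principle at speed $n$ with good rate function $I_D=\Lambda^*$, and in particular the one-sided limit above on $(1/2,1)$. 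Alternatively one can cite the known statement of this LDP in the literature. Continuity of $I_D$ on $(0,1)$ follows from the smoothness and strict convexity of $\Lambda$, completing the verification of \eqref{Eq:LDH}.
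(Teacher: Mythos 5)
Your proof is correct and follows essentially the same route as the paper: verify \eqref{Eq:function_control_2} via Property~\ref{RQ:controle_functions} (hence \eqref{Eq:function_control}), verify \eqref{Eq:LDH} using the LDP for $D(\sigma^{\mathrm{Unif}}_n)$ at speed $n$ and the distributional identity of Eulerian statistics under the uniform measure, and invoke Theorem~\ref{Prob:Meta theorem} with $\alpha=\beta=1$. The only difference is cosmetic: the paper cites \cite{bercu2022sharp} for the uniform-case LDP for descents, whereas you sketch a self-contained Gärtner--Ellis derivation from the Eulerian generating function --- both are fine. One small slip: you write that ``the large deviation upper bound nevertheless survives'' the passage to a general $\CI{1,1}$ model, but Theorem~\ref{Prob:Meta theorem} transfers the full limit (both upper and lower bounds), which is what the corollary asserts.
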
 
\begin{proof}
For the descents, this LDP is already known in the uniform case \cite{bercu2022sharp} which implies the same large deviation  principle for the other Eulerian statistics readily as they have the same distribution.  Hence,   \eqref{Eq:LDH} is satisfied with $\alpha=\beta=1$. 
Universality follows again from Theorem ~\ref{Prob:Meta theorem}, since Property ~\ref{RQ:controle_functions} implies that $ D,A,\mathrm{Exc}$ satisfy  \eqref{Eq:function_control}.
\end{proof}

\subsubsection{Applications to the joint  large deviations for the  descents and  the inverse descents}

    Let $ D $ be the number of descents in a permutation $ \sigma $, and let $ D' $ be the number of {inverse descents}, defined by  
\[
D'(\sigma) = D(\sigma^{-1}).
\]

\begin{corollary}    
    If the sequence of random permutations $ (\sigma_n)_{n \ge 1} $ satisfies  
    $ \CI{1,1} $, then for any $ \frac{1}{2} < x, x' < 1 $,  
    \[
    \lim_{n \to \infty}
    \frac{1}{n} \ln{\P\left(D(\sigma_n) \ge x n,\; D'(\sigma_n) \ge x' n\right)} = 
    -I_D(x) - I_D(x').
    \]
\end{corollary}

\begin{proof}
    This large deviation principle is already known in the uniform case, thanks to a recent work of Bercu et al.~\cite{fredes2024sharpanalysisjointdistribution}.  
    Hence, equation~\eqref{Eq:LDH} is satisfied with $ \alpha = \beta = 1 $.  
    Universality then follows again from Theorem~\ref{Prob:Meta theorem}. In fact, Property~\ref{RQ:controle_functions} implies that $ D $ satisfies \eqref{Eq:function_control}.  
For $ D' $, observe that for any $ \sigma \in S_n $ and any transposition $ (i,j) $,
    \[
    \begin{aligned}
    \left| D'(\sigma \circ (i,j)) - D'(\sigma) \right| 
    &= \left| D((i,j) \circ \sigma^{-1}) - D(\sigma^{-1}) \right| \\
    &= \left| D(\sigma^{-1} \circ (\sigma(i), \sigma(j))) - D(\sigma^{-1}) \right|.
    \end{aligned}
    \]
    Using Property~\ref{RQ:controle_functions}, for descents, $ D' $ satisfies also \eqref{Eq:function_control_2}, and thus the vector $ (D, D') $ satisfies \eqref{Eq:function_control}.
\end{proof}

\subsubsection{Edge of RSK}

In this subsection, we give an additional result, which  is not a direct application of Theorem ~\ref{Prob:Meta theorem} but which proof uses the same techniques. 

We are interested in the lower tail of $\LIS$, and more precisely the length of the first rows of the RSK image.

\begin{proposition} \label{Pro:speend n}
    If the sequence $(\sigma_n)_{n\ge}$ of random permutations satisfies 
    $\CI{\frac 12, 1}$,   then for any $0<x_d<\dots<x_1<2$,  
\[
\limsup_{n\to\infty}
\frac{1}{ n} \ln{\P( \forall i \in [d], \,\, \lambda_i(\sigma_n) \le  x_i\sqrt{n})}=
-I_{\LIS,1}(x_d).
\]
\end{proposition}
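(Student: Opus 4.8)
The plan is to mimic the proof of Theorem~\ref{Prob:Meta theorem} but adapted to a lower-tail event and to the vector of row lengths $(\lambda_1,\dots,\lambda_d)$, exploiting that the relevant event is \emph{decreasing} in each $\lambda_i$ and that the exponential-approximation coupling only changes each $\lambda_i$ by an additive error controlled by $\#(\sigma_n)$. First I would record the coupling: for $\sigma_n$ conjugacy invariant, write $\sigma_n = \rho \, \tau_n \, \rho^{-1}$ where, conditionally on the cycle type, $\rho$ is a uniform relabelling; then build from $\sigma_n$ a permutation $\widehat\sigma_n \in A_{\sigma_n}$ by composing with the $\#(\sigma_n)-1$ transpositions that merge all cycles into one long cycle, and observe (as in the discussion after Property~\ref{RQ:controle_functions}, using (iii) of that Property together with \eqref{Eq:function_control}) that $|\lambda_i(\sigma_n) - \lambda_i(\widehat\sigma_n)| \le C\,\#(\sigma_n)$ for every $i\in[d]$, for a universal constant $C$. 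The point of passing to a single long cycle is that a uniform long cycle of size $n$, after the uniform relabelling, has the same RSK shape distribution as a closely related uniform object; more precisely I would use that $\widehat\sigma_n$ conditioned on being a fixed-length cycle is a uniform cyclic permutation, i.e.\ $\sigma^{\mathrm{Ew}}_{0,n}$, whose $\LIS$ lower tail at speed $n$ coincides with that of $\sigma^{\mathrm{Unif}}_n$ — this is exactly the content needed from Proposition~\ref{Prop: Ewens_LIS} / the $\theta=0$ case, which gives \eqref{eq : CI_LIS 1} for uniform cyclic permutations.

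The second step is the upper bound on $\limsup_n \frac1n \ln \P(\forall i,\ \lambda_i(\sigma_n)\le x_i\sqrt n)$. Since $\{\forall i,\ \lambda_i(\sigma_n)\le x_i\sqrt n\} \supseteq \{\lambda_1(\sigma_n)\le x_d\sqrt n\}$ is false in general — rather, $\{\forall i,\ \lambda_i \le x_i\sqrt n\} \subseteq \{\lambda_d(\sigma_n) \le x_d \sqrt n\}$, and because the $\lambda_i$ are weakly decreasing this last event equals $\{\lambda_1(\sigma_n)\le x_d\sqrt n\}$ only if $x_d$ were the largest, so instead I note the correct inclusion: the event forces in particular $\lambda_d(\sigma_n)\le x_d\sqrt n$, hence (Young diagram has at least $d$ rows of length $\ge \lambda_d$) $\LIS(\sigma_n)=\lambda_1(\sigma_n)$ can still be large, so the binding constraint is $\lambda_1(\sigma_n) \le x_1\sqrt n$ combined with the $d$-th. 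To get the clean bound $-I_{\LIS,1}(x_d)$ I would argue: on the coupling event $\{\#(\sigma_n) \le \eps n^{1/2}\}$, which by $\CI{1/2,1}$ has complement of probability $e^{-\omega(n)}$, the event $\{\forall i,\ \lambda_i(\sigma_n)\le x_i\sqrt n\}$ implies $\lambda_1(\widehat\sigma_n) \le (x_1+C\eps)\sqrt n$ and, crucially, also implies that the shape $\lambda(\widehat\sigma_n)$ has its $d$-th row $\le (x_d + C\eps)\sqrt n$; a uniform long cycle with RSK first row $\le (x_1+C\eps)\sqrt n$ has probability at most $e^{-n I_{\LIS,1}(x_1+C\eps)+o(n)}$, but this gives $I_{\LIS,1}(x_1)$, not $x_d$. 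The resolution — and the key combinatorial input — is that constraining the \emph{first $d$ rows} to be short is governed by the $d$-th row: by a union/box argument, $\P(\lambda_d \le x_d\sqrt n)$ and $\P(\forall i\le d,\ \lambda_i\le x_i\sqrt n)$ have the same exponential rate at speed $n$, namely $I_{\LIS,1}(x_d)$, essentially because conditioned on $\lambda_d\le x_d\sqrt n$ the longer rows can be shortened at sub-exponential cost (this is standard for the Plancherel/uniform measure via the Vershik–Kerov variational formula and I would invoke it with a reference, or prove it via the known asymptotics). This is the step I expect to be the main obstacle.

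The third step is the matching lower bound: restrict the uniform (equivalently, cyclic) ensemble to any configuration of the RSK shape with $\lambda_i \le (x_i-C\eps)\sqrt n$, transfer it through the coupling back to $\sigma_n$ (again on $\{\#(\sigma_n)\le \eps\sqrt n\}$, where the errors go the right way), and use the known lower large-deviation bound for $\sigma^{\mathrm{Unif}}_n$ from Theorem~\ref{Thm: LD_uniform_LIS} together with the same ``$d$-th row is binding'' fact to get $\liminf_n \frac1n\ln\P \ge -I_{\LIS,1}(x_d+C\eps)$; then let $\eps\downarrow 0$ and use continuity of $I_{\LIS,1}$ on $(0,2)$. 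Finally I would note the statement only claims a $\limsup$ equality (a one-sided/lower-tail estimate), so in fact only steps one and two are strictly required, and I would assemble those, then remark that the matching liminf holds as well by step three, yielding the stated equality. Throughout, the two recurring technical points to check carefully are (a) that $\widehat\sigma_n$, after the conjugation-relabelling, is genuinely a uniform cyclic permutation on $[n]$ so that Proposition~\ref{Prop: Ewens_LIS} applies, and (b) that the $+C\#(\sigma_n)$ Lipschitz bound for each of the first $d$ rows is legitimate, which follows from part (iii) of Property~\ref{RQ:controle_functions} since each $\lambda_i$ changes by at most $1$ under a single transposition (interlacing of RSK shapes), hence by at most $\#(\sigma_n)-1$ along the merging path.
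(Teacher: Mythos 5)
Your proposal has essentially the right architecture and matches the paper's strategy in spirit: couple $\sigma_n$ to $T(\sigma_n)\overset{d}{=}\sigma^{\mathrm{Ew}}_{0,n}$, use the Lipschitz property of $(\lambda_1,\ldots,\lambda_d)$ together with the cycle control from $\CI{\frac12,1}$ (which kills the $\P(\#\sigma_n>\varepsilon\sqrt n)$ term at speed $n$), then compare Ewens$(\theta=0)$ to the uniform permutation by the factor--$n$ trick for the upper bound. The paper's transfer from Ewens$(0)$ to general CI is done exactly this way, with $f=(\lambda_1,\lambda_1+\lambda_2,\ldots,\sum_{j\le d}\lambda_j)$ plugged into the sandwich inequality \eqref{eq:controle LIS-Ew}.

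The two approaches do diverge in one structural choice. The paper first reformulates the event using the partial row sums $\sum_{i\le j}\lambda_i$ and invokes Greene's theorem ($\lambda_1+\cdots+\lambda_k$ equals the maximal total length of $k$ disjoint increasing subsequences). This is what makes it possible to state \eqref{Eq:equivalent 18}, the exact analogue of the ping-pong Lemma~\ref{Lem:decomposition_LIS_ping}, and deduce Lemma~\ref{Lem:product_ewens_LIS_2}; these give control that the raw Lipschitz estimate does not. Your version works with individual $\lambda_i$ and the Lipschitz bound only. For the CI-to-Ewens$(0)$ step this is in fact sufficient, precisely because $\CI{\frac12,1}$ lets you restrict to $\{\#\sigma_n\le\varepsilon\sqrt n\}$ at exponential cost $e^{-\omega(n)}$; but you then miss the combinatorial role of Greene's theorem, and your lower-bound discussion (step 3) is muddled. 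There you invoke Theorem~\ref{Thm: LD_uniform_LIS} plus the ``$d$-th row is binding'' fact, when neither is needed: since $\lambda_1\ge\lambda_i$ and $x_d\le x_i$, the simple inclusion $\{\lambda_1\le x_d\sqrt n\}\subseteq\{\forall i,\ \lambda_i\le x_i\sqrt n\}$ reduces the lower bound directly to the Ewens$(0)$ LIS lower tail \eqref{Eq:Ewens_lower}, which is already established in the proof of Theorem~\ref{Main_theorem_LIS}. The ``binding row'' argument is only needed for the upper bound.

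The genuine remaining gap is the one you flag yourself: the uniform-case upper bound
$\limsup_n\frac1n\ln\P\bigl(\forall i\le d,\ \lambda_i(\sigma^{\mathrm{Unif}}_n)\le x_i\sqrt n\bigr)\le -I_{\LIS,1}(x_d)$.
You propose to outsource this to a Vershik--Kerov variational argument; the paper instead asserts it is ``immediate to adapt the proof of \cite{deuschel_zeitouni_1999},'' without carrying it out. So both treatments leave this as input, but your citation is to a different toolbox. This is not a fatal error, but it is the one place you should not present as if it were routine; if you pursue it, the cleaner route is the Deuschel--Zeitouni rate-function approach, in line with the paper's \eqref{Eq:equivalent 18}/Greene's-theorem framing rather than the Plancherel-measure variational principle.
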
   
To the best of our knowledge, this result is not stated in the uniform case  for general integer number $d$ but it is immediate to adapt the proof of \cite{deuschel_zeitouni_1999} to get the uniform case, see Section \ref{secProp13}.

\subsection{Some comments}
Except for the special case of the uniform permutation, there is a lack of existing results regarding the large deviations of general conjugacy-invariant permutations. 
Even for the uniform case, the large deviation theory of many statistics has not been studied yet. For example, we are not aware of existing results for the large deviations for $\LAS$, the  permutations patterns counts, and the upper edge of RSK in the uniform case. Even if the hypotheses \eqref{Eq:function_control_2} is satisfied for a large family of functions, since we are using comparison techniques, the application of Theorem~\ref{Prob:Meta theorem} is not possible without knowing the large deviations  in the case of  uniform permutations.

{ In the context of random matrices, two distinct classes of large deviation results emerge: those concerning the edge of the spectrum and those concerning the the global spectral measure. The methodologies employed in proving these results typically differ significantly.

Our results primarily align with edge results. The analogue to the spectral measure in the context of random permutations is the shape of the RSK image. Using our techniques, one could investigate the universality of the shape. However,  choosing the appropriate topology may be challenging. }

In the literature, some non-universality results have been established for certain statistics but for other families of random permutations. For instance, \cite{deuschel_zeitouni_1999} studied  the longest increasing subsequence of i.i.d. points sampled from a measure on the unit square. Additionally, \cite{pinsky2023large} explores the large deviations of $\LIS$ and $\LAS$ for permutations uniformly chosen among those that avoid a pattern of length $3$. Moreover, in \cite[Theorem B]{meliot2022asymptotics}, a large deviation principle  for the major index has been proven for a distinct family of random permutations.

{

A permutation can be viewed from two perspectives: as a matrix or as a word. In the context of Ewens permutations, their interpretation as random matrices reveals that many spectral properties are non-universal, i.e. depends on $\theta$ as the size of the permutation, goes to infinity \cite{BeArDa,BaNa}. However, when considered as random words, numerous statistics become universal in the limit  \cite{FerayRandomPermutationsCumulants,zbMATH07502136}. This work can be seen as an extension of the latter results to large deviations.

While our proof uses a key argument of \cite{zbMATH07502136}, the context of large deviations necessitates new ideas, particularly concentration inequalities, to ensure exponential approximations. Moreover, for the specific case of  the large deviations of the longest increasing subsequence at speed $n$, we introduce novel probabilistic arguments, notably   Lemma~\ref{Lem:decomposition_LIS_ping} and \eqref{Eq:equivalent 18}.

}
\section{Proof of the results}
\label{sec:proof}

\subsection{Proof of Property~\ref{RQ:controle_functions}}
\label{subsection:proof_remark}
In this section, we prove that many natural statistics satisfy $\eqref{Eq:function_control_2}$.
\begin{itemize}
\item {\it Monotone subsequences ($\LIS$, $\LDS$, and $\lambda_i$):}
The cases of $\LIS$ and $\LDS$ have already been demonstrated in \cite[Lemma 3.1]{kammoun2018}, and the proof for $\lambda_i$ is presented in \cite[Lemma 3.4]{kammoun2018}.
\item  {\it Inversions:}

Let $\sigma \in \mathfrak{S}_n$ be a fixed permutation, and let $1 \leq i_1 < j_1 \leq n$. Define $\rho = \sigma \circ (i_1, j_1)$.

The key observation is that for all $i \notin \{i_1, j_1\}$, we have $\rho(i) = \sigma(i)$. The remaining of the proof varies depending on the specific statistic considered, but the underlying idea remains the same. We will provide detailed explanations for a  few of these statistics.

    Inversions can be expressed as follows:
    $$\mathrm{Inv}(\sigma)= |\{(i, j) : i < j, \sigma(i) > \sigma(j)\}|\,.$$
    We make the following decomposition: 
    \begin{align*}
   \underbrace{\{(i, j) : i < j, \sigma(i) > \sigma(j)\}}_{s}
    &= \underbrace{\{(i, j) : i < j, \sigma(i) > \sigma(j), \{i, j\} \cap \{i_1, j_1\} \neq \emptyset\}}_{s_1} \\
    &\quad \cup \underbrace{\{(i, j) : i < j, \sigma(i) > \sigma(j), \{i, j\} \cap \{i_1, j_1\} = \emptyset\}}_{s_2}.
    \end{align*}
    
    Furthermore, for inversions of $\rho$, we have:
    
    \begin{align*}
    \underbrace{\{(i, j) : i < j, \rho(i) > \rho(j)\}}_{s'}
    &= \underbrace{\{(i, j) : i < j, \rho(i) > \rho(j), \{i, j\} \cap \{i_1, j_1\} \neq \emptyset\}}_{s'_1} \\
    &\quad \cup \underbrace{\{(i, j) : i < j, \rho(i) > \rho(j), \{i, j\} \cap \{i_1, j_1\} = \emptyset\}}_{s'_2}.
    \end{align*}

    Using the key observation, one can see that $s'_2 = s_2$, which allows us to write:
    
    \begin{align*}
    |\mathrm{Inv}(\sigma) - \mathrm{Inv}(\rho)| = |\mathrm{card}(s) - \mathrm{card}(s')| 
    &= |\mathrm{card}(s'_1) - \mathrm{card}(s_1)| \\
    &\leq  \max(\mathrm{card}(s'_1), \mathrm{card}(s_1)) \\
    &\leq 2n.
    \end{align*}
As a consequence,
$$\frac{1}{n}|\mathrm{Inv}(\sigma) - \mathrm{Inv}(\rho)|\le 2$$
which proves that $n^{-1} \mathrm{Inv}$ satisfies $\eqref{Eq:function_control_2}$.
    \item {\it Descents  and Major index:}
     The same idea applies here.  
     
     Recall that $D(\sigma)=|\{i : \sigma(i+1) < \sigma(i)\}|$. 
     Let
    \begin{align*}
    \underbrace{\{i : \sigma(i+1) < \sigma(i)\}}_{s}
    &= \underbrace{\{i : \sigma(i+1) < \sigma(i), \{i, i+1\} \cap \{i_1, j_1\} \neq \emptyset\}}_{s_1} \cup \underbrace{\{i : \sigma(i+1) < \sigma(i), \{i, i+1\} \cap \{i_1, j_1\} = \emptyset\}}_{s_2}
    \end{align*}
    and
    \begin{align*}
    \underbrace{\{i : \rho(i+1) < \rho(i)\}}_{s'}
    &= \underbrace{\{i : \rho(i+1) < \rho(i), \{i, i+1\} \cap \{i_1, j_1\} \neq \emptyset\}}_{s'_1}  \cup \underbrace{\{i : \rho(i+1) < \rho(i), \{i, i+1\} \cap \{i_1, j_1\} = \emptyset\}}_{s'_2}.
    \end{align*}
    Then, we have
    \begin{align*}
       |D(\sigma) - D(\rho)| \leq \max(\mathrm{card}(s_1) , \mathrm{card}(s'_1)) \leq 4.
   \end{align*}
   Similarly, with $\mathrm{Maj}(\sigma)=\sum_{\sigma(i+1)>\sigma(i)} i$, we find 
   \begin{align*}
       |\mathrm{Maj}(\sigma) - \mathrm{Maj}(\rho)| = \left|\sum_{i\in s_1}i -\sum_{i\in s'_1}i\right| \le \max\left(\sum_{i\in s_1} i,\sum_{i\in s'_1} i \right)\leq n \max(\card(s_1),\card(s'_1))    \le 4n.
   \end{align*}
   \begin{itemize}
       \item
For {\it Peaks and Valleys}, the proof is similar to that of descents.

\item {For longest alternating subsequence $\LAS$}, we can use the following characterization 
(see    \cite{houdre2010} and   \cite[Corollary 2]{MR2820763}) 
$$\mathrm{LAS}(\sigma) = 1+ \sum_{i=1}^{n-1} M_k(\sigma),$$
where
$M_1(\sigma)=\mathbbm{1}_{\sigma(1)>\sigma(2)}$
and for $1<k<n$,
$$ M_k(\sigma)= \mathbbm{1}_{\sigma(k-1)>\sigma(k)<\sigma(k+1)} +\mathbbm{1}_{ \sigma(k-1)<\sigma(k)>\sigma(k+1) }. $$
   \end{itemize}
\end{itemize}We have that
$\mathrm{LAS} = M_1 + \mathrm{Valleys} + \mathrm{Peaks}$.
$M_1$ satisfies \eqref{Eq:function_control_2} since it is a bounded function. Therefore, by using Lemma~\ref{RK:vector space}, $\mathrm{LAS}$ also satisfies \eqref{Eq:function_control_2}. 

\subsection{Proof of Property $\CI{\alpha,\alpha}$ for Ewens distributions }\label{cialphaewens}
For the Ewens distribution, it is known that the law of the  total number of cycles has a nice description as a sum of independent Bernoulli variables. 
\begin{proposition} \label{Prop: Ewens cycles law}
The number of cycles of  $\sigma^{\mathrm{Ew}}_{\theta,n}$ is equal in distribution to $\sum_{i=1}^n a_{\theta,i}$  where $(a_{\theta,i})_{i}$ are independent Bernoulli variables with $\P(a_{\theta,i}=1)=\frac{\theta}{i+\theta-1}$.
\end{proposition}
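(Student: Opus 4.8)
The plan is to realize $\sigma^{\mathrm{Ew}}_{\theta,n}$ through the classical sequential (Feller coupling / Chinese restaurant) construction, in which the permutation is built one element at a time and the number of cycles is read off directly as a sum of independent indicators. Concretely, I would build $\sigma^{(j)}\in\mathfrak{S}_{[j]}$ recursively for $j=1,\dots,n$, starting from the empty permutation: given $\sigma^{(j-1)}$, with probability $\frac{\theta}{\theta+j-1}$ we add $j$ as a new fixed point (a new $1$-cycle $(j)$); otherwise we pick $m$ uniformly in $\{1,\dots,j-1\}$ and insert $j$ into the cycle of $m$ immediately after $m$, i.e.\ set $\sigma^{(j)}(j)=\sigma^{(j-1)}(m)$, $\sigma^{(j)}(m)=j$, and $\sigma^{(j)}=\sigma^{(j-1)}$ elsewhere. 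All these choices are made using independent auxiliary randomness, and we set $\sigma_n:=\sigma^{(n)}$.

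The first task is to check that $\sigma_n\overset{d}{=}\sigma^{\mathrm{Ew}}_{\theta,n}$. By induction on $j$ one verifies that the map from a length-$j$ sequence of choices (each being either ``new cycle'' or an element of $[j-1]$) to $\sigma^{(j)}$ is a bijection onto $\mathfrak{S}_{[j]}$ — this is the standard insertion encoding of permutations. Consequently any fixed $\sigma\in\mathfrak{S}_n$ comes from exactly one choice sequence, whose probability is the product over steps: each step opening a new cycle contributes a factor $\frac{\theta}{\theta+j-1}$, every other step contributes $\frac{1}{\theta+j-1}$. Since the number of steps opening a new cycle is precisely $\#(\sigma)$, this product equals $\frac{\theta^{\#(\sigma)}}{\prod_{j=1}^{n}(\theta+j-1)}=\frac{\theta^{\#(\sigma)-1}}{\prod_{i=1}^{n-1}(\theta+i)}$, which is exactly \eqref{defEwens}.

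It then remains to read off the cycle count. Let $a_{\theta,j}:=\mathbf{1}\{\text{step }j\text{ opens a new cycle}\}$; by construction $a_{\theta,j}$ is Bernoulli with parameter $\frac{\theta}{\theta+j-1}=\frac{\theta}{j+\theta-1}$, the family $(a_{\theta,j})_{1\le j\le n}$ is independent (each is a function of the independent auxiliary variables), and $\#(\sigma_n)=\sum_{j=1}^{n}a_{\theta,j}$ because each cycle of $\sigma_n$ is opened at exactly one step. Combined with the previous paragraph this gives the proposition.

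The only genuinely non-trivial point is the bijectivity used above; if one prefers to avoid it, it can be bypassed by a generating-function computation. The number of $\sigma\in\mathfrak{S}_n$ with $\#(\sigma)=k$ is the unsigned Stirling number $c(n,k)$, with $\sum_k c(n,k)x^k = x(x+1)\cdots(x+n-1)$, so $\mathbb{P}(\#(\sigma^{\mathrm{Ew}}_{\theta,n})=k)=c(n,k)\,\theta^k/\prod_{j=1}^{n}(\theta+j-1)$; on the other hand the probability generating function of $\sum_{j=1}^n a_{\theta,j}$ is $\prod_{j=1}^{n}\frac{(j-1)+\theta z}{(j-1)+\theta}=\frac{\prod_{j=1}^{n}(\theta z+j-1)}{\prod_{j=1}^{n}(\theta+j-1)}=\frac{\sum_k c(n,k)(\theta z)^k}{\prod_{j=1}^{n}(\theta+j-1)}$, and comparing coefficients of $z^k$ concludes. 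I would keep the Feller-coupling argument as the main proof, since the explicit coupling it produces is reused elsewhere, and mention the generating-function version as a remark.
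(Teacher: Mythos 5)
Your proof is correct, and it is exactly the Chinese restaurant process / Feller-coupling argument that the paper invokes for this proposition without writing it out (the paper simply cites it as classical). Both the sequential-insertion bijection and the normalization $\prod_{j=1}^{n}(\theta+j-1)=\theta\prod_{i=1}^{n-1}(\theta+i)$ check out against \eqref{defEwens}, and the generating-function remark is a valid independent verification.
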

This property can be proved using the Chinese restaurant process description of Ewens permutations.  It is a classical result,  we can cite for example \cite[equation (1.1)]{Ch2013} and \cite{MR3458588}. 
Many concentration inequalities are known for the sum of independent variables.  For our purpose, we use a  special form of the Bennett’s inequality. 
\begin{proposition}\cite[Theorem 2.9]{Concentration}\label{Prob: Benett}
\footnote{The version we use is\cite[Theorem 2.9]{Concentration} by setting $b=1$.}

Let $X_1,X_2,\dots,X_n$ be independent random variables such that almost surely $X_i \le 1$. Then 

\begin{align}
  \ln\left(  \P\left(\sum_{i=1}^n X_i- \E(X_i) > t   \right)\right)\le -(v+t)\ln(1+\frac tv)+t.
\end{align}
where $v=\sum_{i=1}^n{\E{X^2_i}}$.
\end{proposition}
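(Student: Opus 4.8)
The plan is to prove this by the classical Cram\'er--Chernoff (exponential Markov) method; since the statement is quoted from \cite[Theorem 2.9]{Concentration} one may of course simply invoke it, but the self-contained argument is short. Write $S=\sum_{i=1}^n\big(X_i-\E(X_i)\big)$. For any $\lambda>0$, Markov's inequality applied to $e^{\lambda S}$ together with the independence of the $X_i$ gives $\P(S>t)\le e^{-\lambda t}\prod_{i=1}^n\E\big(e^{\lambda(X_i-\E X_i)}\big)$, so everything reduces to bounding the moment generating function of a single centered variable that is bounded above by $1$, and then optimizing over $\lambda$.

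The key step is the one-variable estimate: if $X\le 1$ almost surely and $\lambda>0$, then $\E\big(e^{\lambda(X-\E X)}\big)\le\exp\big((e^\lambda-1-\lambda)\,\E(X^2)\big)$. This rests on the pointwise inequality $e^{\lambda x}\le 1+\lambda x+(e^\lambda-1-\lambda)\,x^2$, valid for every real $x\le 1$. I would prove this pointwise bound by rewriting it as $\phi(x):=\frac{e^{\lambda x}-1-\lambda x}{x^2}\le\phi(1)=e^\lambda-1-\lambda$, with $\phi$ extended continuously by $\phi(0):=\lambda^2/2$, and then showing that $\phi$ is nondecreasing on $\R$: a short computation gives $x^3\phi'(x)=\psi(x)$ for the explicit function $\psi(x)=\lambda x e^{\lambda x}+\lambda x-2e^{\lambda x}+2$, and since $\psi(0)=0$ and $\psi'(x)=\lambda\big(\lambda x e^{\lambda x}-e^{\lambda x}+1\big)\ge 0$ for $\lambda>0$ (the bracket is nonnegative, as it vanishes at $0$ and has derivative $\lambda^2 x e^{\lambda x}$), $\psi(x)$ has the sign of $x$, hence $\phi'\ge 0$. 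Integrating the pointwise bound against the law of $X$ gives $\E(e^{\lambda X})\le 1+\lambda\E X+(e^\lambda-1-\lambda)\E(X^2)$, and since $1+u\le e^u$ the right-hand side is at most $\exp\big(\lambda\E X+(e^\lambda-1-\lambda)\E(X^2)\big)$; multiplying by $e^{-\lambda\E X}$ yields the claim.

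Multiplying over $i=1,\dots,n$ then gives $\P(S>t)\le\exp\big(-\lambda t+(e^\lambda-1-\lambda)\,v\big)$ with $v=\sum_{i=1}^n\E(X_i^2)$, and it remains to minimize the exponent $g(\lambda)=-\lambda t+(e^\lambda-1-\lambda)\,v$ over $\lambda>0$: the equation $g'(\lambda)=-t+(e^\lambda-1)v=0$ has solution $\lambda^\star=\ln(1+t/v)>0$, and substituting it back gives $g(\lambda^\star)=-t\ln(1+t/v)+\big(\tfrac tv-\ln(1+t/v)\big)v=-(v+t)\ln\!\big(1+\tfrac tv\big)+t$, exactly the asserted bound. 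The only point needing genuine care is the monotonicity of $\phi$ (equivalently, the sign analysis of $\psi$); the Chernoff inequality, the product over independent coordinates, and the optimization over $\lambda$ are all routine bookkeeping.
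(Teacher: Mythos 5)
The paper does not prove this proposition; it is quoted verbatim (with $b=1$) from the cited reference, so there is no ``paper's proof'' to compare against. Your self-contained argument is correct and is precisely the standard proof of Bennett's inequality that one finds in that reference: a Chernoff bound reduces the problem to the one-variable moment-generating-function estimate $\E\big(e^{\lambda(X-\E X)}\big)\le\exp\big((e^\lambda-1-\lambda)\E(X^2)\big)$, which you obtain from the pointwise inequality $e^{\lambda x}\le 1+\lambda x+(e^\lambda-1-\lambda)x^2$ for $x\le 1$ via the monotonicity of $\phi(x)=(e^{\lambda x}-1-\lambda x)/x^2$, and the optimization at $\lambda^\star=\ln(1+t/v)$ then gives exactly $-(v+t)\ln(1+t/v)+t$. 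The sign analysis of $\psi(x)=\lambda x e^{\lambda x}+\lambda x-2e^{\lambda x}+2$ via $\psi(0)=0$ and $\psi'(x)=\lambda(\lambda x e^{\lambda x}-e^{\lambda x}+1)\ge 0$ is correct, and together with the sign of $x^3$ it does yield $\phi'\ge 0$ on both half-lines. No gaps.
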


\begin{corollary} \label{Cor:Ewens cycles}
The sequence    $(\sigma^{\mathrm{Ew}}_{\theta,n})_{n\geq 1}$ satisfies $\CI{\alpha,\alpha}$,
for every $\theta \ge 0 $, and every $\alpha>0$, \end{corollary}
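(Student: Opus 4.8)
The goal is to show that for every $\theta \ge 0$ and every $\alpha > 0$, the Ewens permutations satisfy $\CI{\alpha,\alpha}$, i.e. that for every $\varepsilon > 0$,
\[
\lim_{n\to\infty} \frac{\ln \P\left( \#\sigma^{\mathrm{Ew}}_{\theta,n} > \varepsilon n^\alpha \right)}{n^\alpha} = -\infty.
\]
The plan is to combine the exact description of $\#\sigma^{\mathrm{Ew}}_{\theta,n}$ from Proposition~\ref{Prop: Ewens cycles law} with the Bennett-type inequality of Proposition~\ref{Prob: Benett}. By Proposition~\ref{Prop: Ewens cycles law}, $\#\sigma^{\mathrm{Ew}}_{\theta,n} \overset{d}{=} \sum_{i=1}^n a_{\theta,i}$ with $a_{\theta,i}$ independent Bernoulli of parameter $p_i := \frac{\theta}{i+\theta-1}$ (with $p_1 = 1$ for the first term when $\theta>0$; for $\theta=0$ the sum is identically $1$ and the statement is trivial, so assume $\theta>0$). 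The classical estimate $\E\left(\#\sigma^{\mathrm{Ew}}_{\theta,n}\right) = \sum_{i=1}^n p_i = \theta \ln n + O(1)$ shows the mean grows only logarithmically, which is $o(n^\alpha)$; similarly $v := \sum_{i=1}^n \E(a_{\theta,i}^2) = \sum_{i=1}^n p_i = \theta\ln n + O(1)$.

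The key step: apply Proposition~\ref{Prob: Benett} with $X_i = a_{\theta,i}$ (which satisfy $X_i \le 1$ a.s.) and $t = t_n := \varepsilon n^\alpha - \sum_{i=1}^n p_i$. For $n$ large, $t_n \ge \frac{\varepsilon}{2} n^\alpha$ since $\sum p_i = O(\ln n)$. Then
\[
\ln \P\left( \#\sigma^{\mathrm{Ew}}_{\theta,n} > \varepsilon n^\alpha \right) = \ln \P\left( \sum_{i=1}^n X_i - \E(X_i) > t_n \right) \le -(v + t_n)\ln\!\left(1 + \tfrac{t_n}{v}\right) + t_n.
\]
Now divide by $n^\alpha$. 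We have $t_n/n^\alpha \to \varepsilon$ (more precisely $t_n \sim \varepsilon n^\alpha$), while $v = O(\ln n) = o(n^\alpha)$, so $t_n/v \to \infty$ and hence $\ln(1 + t_n/v) \to \infty$. Therefore
\[
\frac{-(v+t_n)\ln\!\left(1+\tfrac{t_n}{v}\right) + t_n}{n^\alpha} \le \frac{-t_n\ln\!\left(1+\tfrac{t_n}{v}\right)}{n^\alpha} + \frac{t_n}{n^\alpha} \longrightarrow -\infty,
\]
since the first term behaves like $-\varepsilon \ln(1+t_n/v) \to -\infty$ and the second stays bounded. This gives exactly the claimed limit $-\infty$, establishing $\CI{\alpha,\alpha}$.

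The main thing to be careful about is bookkeeping rather than a genuine obstacle: one must verify that $\sum_{i=1}^n \frac{\theta}{i+\theta-1}$ is indeed $O(\ln n)$ uniformly enough (comparison with the harmonic series, handling the $\theta$-shift), and that the edge case $\theta = 0$ is dispatched separately since then $\#\sigma^{\mathrm{Ew}}_{0,n} \equiv 1 < \varepsilon n^\alpha$ eventually and the probability is $0$. There is also a trivial monotonicity remark to record: since $\beta \le \alpha$ implies $\CI{\alpha,\beta}\subset\CI{\alpha,\alpha}$ (as noted in the text), proving the $\beta = \alpha$ case is all that is needed. No delicate estimate is required — the logarithmic growth of the mean against the polynomial threshold $n^\alpha$ does all the work, and Bennett's inequality is comfortably strong enough.
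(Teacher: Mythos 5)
Your proposal is correct and follows exactly the paper's own argument: represent $\#\sigma^{\mathrm{Ew}}_{\theta,n}$ as a sum of independent Bernoullis via Proposition~\ref{Prop: Ewens cycles law}, note that the mean and variance are both $\theta\ln n + O(1)$, and apply Proposition~\ref{Prob: Benett} with $X_i = a_{\theta,i}$ and $t = \varepsilon n^\alpha - v$. The only difference is that you spell out the asymptotics ($t_n/v\to\infty$, hence the bound divided by $n^\alpha$ tends to $-\infty$) and the trivial $\theta=0$ case, which the paper leaves implicit.
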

\begin{proof}
    Using Proposition~\ref{Prop: Ewens cycles law}, we have 
$\#(\sigma^{\mathrm{Ew}}_{\theta,n})\overset d=\sum_{i=1}^n a_{\theta,i}$, and 
$$\sum_{i=1} ^n \E (a_{\theta,i}) = \sum_{i=1} ^n \E (a^2_{\theta,i})= \theta \ln (n)+O(1).$$
 We want to prove that for every $\varepsilon>0$, every $\alpha>0$, 
 \begin{align}
\lim_{n\to\infty}      \frac{\ln{\P\left( \frac{\# \sigma^{\mathrm{Ew}}_{\theta,n}}{n^\alpha} >\varepsilon \right)}}{n^\alpha}  = -\infty,
    \end{align}
  This is a direct consequence of Proposition \ref{Prop: Ewens cycles law} by setting  $X_i=a_{\theta,i}$, $t= \varepsilon n^\alpha-v$ in Proposition ~\ref{Prob: Benett}.
\end{proof}

\subsection{Proof of Theorem ~\ref{Prob:Meta theorem}}

In order to prove Theorem ~\ref{Prob:Meta theorem}, we need to introduce a one step Markov chain $T$. It is  the same  as in \cite{kam2}.  It maps a conjugacy invariant random permutations $\sigma_n$ to a permutation having the same law as $\sigma^{\mathrm{Ew}}_{0,n}$. 

This Markov chain does not change a lot statistics satisfying \eqref{Eq:function_control}.
Let  $T$  be the Markov chain defined on $\mathfrak{S}_n$ and associated to the stochastic matrix 
$\left[\frac{\mathrm{1}_{A_{\sigma}}(\rho)}{\mathrm{card}(A_{\sigma})}\right]_{\sigma,\rho \in \mathfrak{S}_n}$ where we recall that
\begin{equation*}
    A_\sigma =  \begin{cases} 
    \{\sigma\} & \text{if } \#(\sigma)=1\\ 
      \{\rho \in \mathfrak{S}_n, \sigma^{-1}\circ\rho=(i_1,i_2)\circ(i_1,i_3)\dots\circ(i_1,i_{\#(\sigma)}) \text{ and } \#(\rho)=1 \} &\text{if } \#(\sigma)>1 
    \end{cases}.
\end{equation*}

$T$ is then the Markov operator mapping a permutation $\sigma$ to a permutation uniformly chosen at random  among the permutations obtained by merging the cycles of $\sigma$ using transpositions having all a common point. 

\begin{lemma}{\cite[Lemma~6]{kam2}} \label{Lemma: coupling}
For any  conjugacy invariant random permutation $\sigma_n$ on $\mathfrak{S}_n$,

$$T(\sigma_n) \overset d= \sigma^{\mathrm{Ew}}_{0,n}.$$
\end{lemma}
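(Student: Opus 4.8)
The plan is to show that the one-step Markov operator $T$, when applied to an arbitrary conjugacy invariant permutation $\sigma_n$, produces a uniform cyclic permutation, i.e. a permutation distributed as $\sigma^{\mathrm{Ew}}_{0,n}$. The key structural fact is that $T$ decomposes according to cycle type: conditionally on the conjugacy class of $\sigma_n$, the output $T(\sigma_n)$ depends only on that class, so it suffices to prove that for every fixed conjugacy class $\mathcal{C}$ (equivalently, every integer partition $\mu \vdash n$), the distribution of $T(\sigma)$ when $\sigma$ is uniform on $\mathcal{C}$ is uniform over the $(n-1)!$ cyclic permutations of $[n]$. Since a conjugacy invariant $\sigma_n$ is, conditionally on its cycle type, uniform on the corresponding class, averaging over the law of the cycle type of $\sigma_n$ then gives the claim for arbitrary conjugacy invariant $\sigma_n$.

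First I would record the two facts that make the conditional statement plausible: (a) every $\rho \in A_\sigma$ is indeed an $n$-cycle, because multiplying a permutation with $k=\#(\sigma)$ cycles by a ``star'' of $k-1$ transpositions $(i_1,i_2)(i_1,i_3)\cdots(i_1,i_k)$ through representatives $i_1,\dots,i_k$ of the distinct cycles merges all cycles into one (this is the standard ``tree of transpositions merges cycles'' computation, and one checks the product has no other effect on cycle structure); and (b) by conjugation invariance of the whole construction, for any $\tau \in \kS_n$ one has $A_{\tau\sigma\tau^{-1}} = \tau A_\sigma \tau^{-1}$ as multisets, so the transition kernel of $T$ commutes with conjugation. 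Fact (b) already shows that $T(\sigma)$ for $\sigma$ uniform on $\mathcal{C}$ is a conjugation-invariant distribution supported on $n$-cycles; but the set of $n$-cycles is a single conjugacy class, so the only conjugation-invariant probability measure on it is the uniform one. This is the crux of the argument, and it is short once (a) and (b) are in hand.

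The remaining step is the unconditioning: write $\P(T(\sigma_n) = \gamma) = \sum_{\mu \vdash n} \P(\text{cycle type of } \sigma_n = \mu)\, \P(T(\sigma) = \gamma \mid \sigma \text{ uniform on } \mathcal{C}_\mu)$, and substitute the conditional value $1/(n-1)!$ just obtained, which is independent of $\mu$; the sum of the prior weights is $1$, so $\P(T(\sigma_n)=\gamma) = 1/(n-1)!$ for every $n$-cycle $\gamma$, which is exactly the law of $\sigma^{\mathrm{Ew}}_{0,n}$ by the normalization in \eqref{defEwens} with $\theta = 0$.

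The main obstacle is purely the combinatorial verification in step (a): one must check carefully that for distinct $i_1,\dots,i_k$ chosen one per cycle of $\sigma$, the product $\sigma\circ(i_1,i_2)\circ\cdots\circ(i_1,i_k)$ is a single $n$-cycle (and, for the definition of $A_\sigma$ to be consistent, that the resulting set does not depend on which representatives or which ordering one picks — or rather, that $A_\sigma$ is defined as the set of \emph{all} $n$-cycles obtainable this way, over all choices). Everything else — the conjugation-equivariance in (b), the ``unique invariant measure on a conjugacy class'' observation, and the conditioning identity — is routine. Since the statement is quoted from \cite[Lemma~6]{kam2}, I would in practice cite it, but the self-contained argument above is the natural one.
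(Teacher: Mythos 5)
Your proposal is correct and follows essentially the same route as the paper's sketch: show that $T(\sigma_n)$ is almost surely an $n$-cycle, that $T$ preserves conjugacy invariance (your fact (b), which you make explicit via $A_{\tau\sigma\tau^{-1}} = \tau A_\sigma \tau^{-1}$), and conclude because the unique conjugation-invariant distribution on the single conjugacy class of $n$-cycles is the uniform one, i.e.\ $\sigma^{\mathrm{Ew}}_{0,n}$. One small simplification worth noting: you flag as the ``main obstacle'' checking that the products in step (a) are always $n$-cycles, but the paper's definition of $A_\sigma$ already imposes $\#(\rho)=1$ as a filter, so (a) is only needed to guarantee $A_\sigma \neq \emptyset$ (and hence that $T$ is well defined), not to characterize $A_\sigma$.
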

The proof is detailed in \cite[Lemma~6]{kam2} but the idea is rather simple.
By construction, $T(\sigma_n)$ has almost surely one cycle. 
Since permutations with one cycle  belong to the same conjugacy class, it is sufficient to prove that $T(\sigma_n)$ is conjugacy invariant as soon as $\sigma_n$ is conjugacy invariant.  


We move now to the proof of Theorem~\ref{Prob:Meta theorem}
\begin{proof}[Proof of Theorem{ \ref{Prob:Meta theorem}}]
We consider a function $f$ on $\cup_{n\geq 1}\mathfrak{S}_n$ with values in $\mathbb R^d$.
One can suppose for simplicity that
 \begin{equation}\label{p0}  \sup_{\sigma\in \cup_{n\geq 1}\mathfrak{S}_n} \sup_{\rho\in A_{\sigma}}   \frac{|f(\sigma)-f(\rho)|}{\#(\sigma)} \le 1. 
 \end{equation}
 Otherwise one can apply the theorem to $\frac{f}{\sup_{\sigma\in \cup_{n\geq 1}\mathfrak{S}_n} \sup_{\rho\in A_{\sigma}}   \frac{|f(\sigma)-f(\rho)|}{\#(\sigma)}}$ since by hypothesis the denominator is finite.
 Let $\sigma_{n}$ be conjugacy invariant.
By Lemma~\ref{Lemma: coupling}, 
we know that 
\begin{equation}\label{p1}\P(f (\sigma^{\mathrm{Ew}}_{0,n}) \ge xn^\alpha)= \P(f (T(\sigma_{n}))  \ge xn^\alpha ).\end{equation}
Moreover, by \eqref{p0}, we have
$$   \P(f(\sigma_{n})  \ge xn^\alpha + \#\sigma_n \mathbf{1} ) \le \P(f (T(\sigma_{n}) ) \ge xn^\alpha ) \le   \P(f (\sigma_{n})  \ge xn^\alpha - \#\sigma_n\mathbf{1} ) \,. $$
Here, $\mathbf{1}$ is the vector of $\mathbb{R}^d$ with all components equal to $1$. 
Let $\varepsilon>0$. We write the following decomposition  
\begin{align*}
  p^\pm:=  \P(f (\sigma_{n})  \ge xn^\alpha \pm \#\sigma_n \mathbf{1} ) =&  \underbrace{\P(f (\sigma_{n})  \ge xn^\alpha \pm \#\sigma_n \mathbf{1}  | \#(\sigma_{n}) < \varepsilon n^\alpha)}_{p^{\pm}_1} \underbrace{ \P(\#(\sigma_{n}) < \varepsilon n^\alpha)}_{p_2} 
    \\&+ \underbrace{\P(f (\sigma_{n})  \ge xn^\alpha \pm \#\sigma_n\mathbf{1}  | \#(\sigma_{n}) \ge
    \varepsilon n^\alpha )}_{p^{\pm}_3} \underbrace{\P(\#(\sigma_{n}) \ge  \varepsilon n^\alpha)}_{p_4}.
\end{align*}
Moreover, 
$$  p^-_1 \le   \P(f(\sigma_{n})  \ge (x-\varepsilon \mathbf{1}) n^\alpha   | \#(\sigma_{n}) < \varepsilon n^\alpha ) 
 $$
gives readily that 
$$p^-_1 p_2 \le  \P(f (\sigma_{n})  \ge (x-\varepsilon\mathbf{1})  n^\alpha   \text{ and  }  \#(\sigma_{n}) < \varepsilon  n^\alpha )  \le  \P(f (\sigma_{n})  \ge (x-\varepsilon \mathbf{1})  n^\alpha).$$
Consequently, we find that 
\begin{equation}\label{p2} p^{-} \le  \P(f (\sigma_{n})  \ge (x-\varepsilon\mathbf{1}) n^\alpha ) + p_4.\end{equation}
Similarly,
$$  p^+_1 \ge   \P(f (\sigma_{n})  \ge (x+\varepsilon\mathbf{1})  n^\alpha   | \#(\sigma_{n}) < \varepsilon  n^\alpha  ) 
 $$
and  then 
\begin{align}
p^+ \ge p^+_1 p_2 &\ge  \P(f (\sigma_{n})  \ge (x+\varepsilon\mathbf{1})  n^\alpha    \text{ and  }  \#(\sigma_{n}) < \varepsilon  n^\alpha  )  \nonumber
\\&= \P(f (\sigma_{n})  \ge (x+\varepsilon \mathbf{1})  n^\alpha) -  \P(f (\sigma_{n})  \ge (x+\varepsilon \mathbf{1})  n^\alpha   \text{and  }  \#(\sigma_{n}) \ge \varepsilon  n^\alpha  ) \nonumber
\\&\ge   P(f (\sigma_{n})  \ge (x+\varepsilon \mathbf{1})  n^\alpha  -p_4.\label{p3}
\end{align}
To sum-up, for any conjugacy invariant permutation $\sigma_n$, for any $\varepsilon>0$, \eqref{p0},\eqref{p1},\eqref{p2},\eqref{p3} imply 
\begin{align} \label{eq:controle LIS-Ew}
\P(f (\sigma_{n})  \ge (x+\varepsilon \mathbf{1})  n^\alpha ) - \P(\#(\sigma_{n}) \ge  \varepsilon  n^\alpha )
&\le  \P(f(\sigma^{\mathrm{Ew}}_{0,n}) \ge x n^\alpha ) 
\\&\le \P(f (\sigma_{n})  \ge (x-\varepsilon\mathbf{1})  n^\alpha ) + \P(\#(\sigma_{n}) \ge  \varepsilon  n^\alpha ) \nonumber.
\end{align}
We next choose $\sigma_n=\sigma^{\text{Unif}}_{n}$ to be the uniform permutation. Because the  Ewens distribution with $\theta=1$ is the uniform distribution, Corollary~\ref{Cor:Ewens cycles} implies that $\sigma^{\text{Unif}}_{n}$ is $\CI{\alpha,\alpha}$. Therefore, for any $\varepsilon>0$, for $n$ large enough we have under the hypothesis of Proposition ~\ref{Prob:Meta theorem} and because $\beta\le\alpha$, we find for all $M>0$ 
$$
    \P(f (\sigma^{\text{Unif}}_{n} )  \ge (x+\varepsilon \mathbf{1}) n^\alpha) -\P(\#(\sigma^{\text{Unif}}_{n}) \ge \varepsilon  n^\alpha )= \exp(-n^\beta (J(x+\varepsilon \mathbf{1})) +o(1))- o( \exp(-n^\beta M))$$
    If $J(x+\varepsilon \mathbf{1})$ is infinite, then  the right hand side will also be smaller than $\exp(-n^\beta M)$ for $M$ as large as wished, whereas if it is finite, taking $M>J(x+\varepsilon \mathbf{1})$ also gives
    $$
     \P(f (\sigma^{\text{Unif}}_{n} )  \ge (x+\varepsilon \mathbf{1}) n^\alpha) -\P(\#(\sigma^{\text{Unif}}_{n}) \ge \varepsilon  n^\alpha )
    =\exp(-n^\beta (J(x+\varepsilon \mathbf{1}) +o(1)))\,.
$$
Similarly 
\begin{align*}
    \P(f(\sigma^{\text{Unif}}_{n})  \ge (x-\varepsilon \mathbf{1}) n^\alpha) +\P(\#(\sigma^{\text{Unif}}_{n}) \ge  \varepsilon n^\alpha)= \exp(-n^\beta J(x-\varepsilon\mathbf{1})+ o(n^\beta ))(1+o(1)).
\end{align*}
We therefore conclude that, for every $\varepsilon>0$, 
\[-J(x+\varepsilon\mathbf{1}) \le
\liminf_{n\to\infty}
\frac{1}{n^\beta} \ln{\P(f(\sigma^{\text{Ew}}_{0,n}) \ge xn^\alpha)}\le 
\limsup_{n\to\infty}
\frac{1}{n^\beta} \ln{\P(f(\sigma^{\text{Ew}}_{0,n}) \ge xn^\alpha)} \le -J(x-\varepsilon\mathbf{1}).
\]
Consequently, since we assumed that $J$ is continuous, we find by letting $\varepsilon$ going to zero
\begin{equation}\label{liminf}\liminf_{n\to\infty}
\frac{1}{n^\beta} \ln{\P(f(\sigma^{\text{Ew}}_{0,n}) \ge xn^\alpha)}=
\limsup_{n\to\infty}
\frac{1}{n^\beta} \ln{\P(f(\sigma^{\text{Ew}}_{0,n}) \ge xn^\alpha)}= -J(x).
\end{equation}
Now let $\sigma_n$ be a conjugacy invariant permutation. Equation \eqref{eq:controle LIS-Ew} implies (by choosing first to replace $x$ by $x+\varepsilon \mathbf{1}$ then by $x-\varepsilon\mathbf{1}$) that

\begin{align}
\P(f (\sigma^{\mathrm{Ew}}_{0,n})  \ge (x+\varepsilon \mathbf{1}) n^\alpha) - \P(\#(\sigma_{n}) \ge  \varepsilon n^\alpha)
&\le  \P(f (\sigma_{n}) \ge xn^\alpha) \label{gen}
\\&\le \P(f(\sigma^{\mathrm{Ew}}_{0,n})  \ge (x-\varepsilon \mathbf{1}) n^\alpha) + \P(\#(\sigma_{n}) \ge  \varepsilon n^\alpha)\,. \nonumber
\end{align}
Under  hypothesis \eqref{eq:cycle_controle}, \eqref{liminf} implies that
\begin{align} \label{Eq:LD_f_1}
    \P(f (\sigma^{\mathrm{Ew}}_{0,n})  \ge (x-\varepsilon \mathbf{1}) n^\alpha) +\P(\#(\sigma_{n}) \ge  \varepsilon n^\alpha)= \exp(-n^\beta J(x-\varepsilon\mathbf{1})+ o(n^\beta ))(1+o(1)),
\end{align}
and 
\begin{align}
    \P(f (\sigma^{\mathrm{Ew}}_{0,n})  \ge (x+\varepsilon\mathbf{1}) n^\alpha) -\P(\#(\sigma_{n}) \ge  \varepsilon n^\alpha)= \exp(- n^\beta J(x+\varepsilon\mathbf{1})+ o(n^\beta))(1+o(1)).
\end{align}
Plugging these estimates in \eqref{gen}, letting $n$ going to infinity and then $\varepsilon$ going to zero (while using the continuity of $J$) completes the proof of Theorem ~\ref{Prob:Meta theorem} for permutations satisfying \eqref{eq:cycle_controle}.
\end{proof}

\subsection{{Proof of Theorem ~\ref{Main_theorem_LIS}}}\label{PGDLIS}
We first remark that it is enough to prove the Theorem with $E=[x, \infty)$ for $x\ge 2$ or $E=(0,x)$ if $x\le 2$. Indeed, for $x>2$, it is not hard to see that
$I_{\LIS,\frac{1}{2}}$ is strictly increasing  so that the probability that $\LIS$ belongs to $[x,+\infty)$ is equivalent to the probability that it belongs to $[x,x+\delta]$ for any $\delta>0$. Hence, \eqref{eq : CI_LIS 1/2} for $E=[x, \infty)$ and every $x\ge 2$ yields the weak large deviation principle. Exponential tightness is as well clear as $I_{\LIS,\frac{1}{2}}$ goes to $+\infty$ at infinity. Hence, Corollary \ref{Thm:LDS_right_eq} implies the full large deviation principle above $2$. Similarly,  it is easy to see that $I_{\LIS,1}$ is strictly decreasing on $(0,2)$ so that proving \eqref{eq : CI_LIS 1} for $E=(0,x]$ yields the full large deviation principle below $2$. However, proving \eqref{eq : CI_LIS 1} for $E=(0,x]$ for $x<2$ is more complicated 
because it is not possible to use Theorem ~\ref{Prob:Meta theorem} {since $1=\beta>\alpha=1/2$}. We need a proof specific to the longest increasing subsequence that we detail below. 
Before proving this result we prove a key lemma. 
   \begin{lemma} \label{Lem:decomposition_LIS_ping}
For every given permutation  $\sigma$, for every integer number $k$,

    $$ \mathbb{P}(\LIS(T(\sigma)) \leq  \LIS(\sigma)+ k) \geq   \inf_{\tau\in S_{\#(\sigma)}} \mathbb{P} (\LIS(\tau\circ\sigma^{\mathrm{Ew}}_{0,\#(\sigma)})<k) $$  
\end{lemma}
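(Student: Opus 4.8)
The statement compares $\LIS(T(\sigma))$ with $\LIS(\sigma)$, where $T$ is the cycle-merging Markov operator. The idea is to decompose $T(\sigma)$ explicitly. Recall that $T$ acts on $\sigma$ by picking distinct points $i_1,\dots,i_{\#(\sigma)}$ (one in each cycle, say) and forming $\sigma\circ(i_1,i_2)\circ(i_1,i_3)\circ\cdots\circ(i_1,i_{\#(\sigma)})$, so that the result is a uniform random element of $A_\sigma$, a single $n$-cycle. The key observation I would exploit is that composing $\sigma$ with a product of $\#(\sigma)-1$ transpositions sharing a common point can be rewritten as $\tau'\circ\sigma$ for a suitable permutation $\tau'$ supported on a set of size at most $\#(\sigma)$, or — more usefully for $\LIS$ — that $T(\sigma)$ can be realised as $\sigma$ followed by a permutation that only rearranges the $\#(\sigma)$ ``marked'' values. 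Concretely I would write $T(\sigma)\overset{d}{=}\pi\circ\sigma$ where $\pi$ is a random permutation fixing all but $\#(\sigma)$ points, and whose restriction to those points is (by Lemma~\ref{Lemma: coupling} applied to the symmetric group on the marked points, or by a direct check) distributed like $\tau\circ\sigma^{\mathrm{Ew}}_{0,\#(\sigma)}$ for some worst-case $\tau$.

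\textbf{Key steps.} First I would set $m=\#(\sigma)$ and make precise the decomposition $T(\sigma) = c\circ\sigma$ (equivalently $\sigma\circ c'$) where $c=(i_1,i_2)\circ\cdots\circ(i_1,i_m)$ is an $m$-cycle on the marked points; one checks this is the merging operation. Then I would observe that an increasing subsequence of $\sigma$ whose index set avoids the marked positions entirely is still an increasing subsequence of $T(\sigma)$ — but that only gives control in the wrong direction. Instead, the right move is: every increasing subsequence of $T(\sigma)=c\circ\sigma$ splits into the part using marked values and the part not using them; removing the at most $\LIS$ (coming from marked values) $\dots$ — rather, the cleanest route is to run the argument in reverse. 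Write $\sigma = c^{-1}\circ T(\sigma)$, so $\LIS(\sigma)\geq \LIS(T(\sigma)) - (\text{number of marked points involved})$ is false in general; what is true is a one-sided inequality coming from the fact that $c^{-1}$ only permutes $m$ values. So I would instead directly compare: conditionally on the marked set and on $\sigma$ restricted off the marked positions, $T(\sigma)$ is obtained by inserting the marked values according to a uniform $m$-cycle, which dominates (stochastically, for the event $\{\LIS\le \text{something}\}$) the worst case over $\tau$ of $\tau\circ\sigma^{\mathrm{Ew}}_{0,m}$. Combining $\LIS(T(\sigma))\le \LIS(\sigma) + \LIS(\text{marked part})$ with $\LIS(\text{marked part})\le k$ on the relevant event yields $\mathbb P(\LIS(T(\sigma))\le \LIS(\sigma)+k)\ge \mathbb P(\LIS(\text{marked part})<k)$, and bounding the latter below by the infimum over $\tau\in\kS_m$ of $\mathbb P(\LIS(\tau\circ\sigma^{\mathrm{Ew}}_{0,m})<k)$ finishes it, using $\sigma^{\mathrm{Ew}}_{0,m}$ = uniform cyclic permutation on $m$ points.

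\textbf{Main obstacle.} The delicate point is the sub-additivity-type inequality $\LIS(c\circ\sigma)\le \LIS(\sigma) + (\text{length of longest increasing run among the }m\text{ marked values of }c\circ\sigma)$, together with identifying the conditional law of that ``marked part'' as a conjugation $\tau\circ(\text{uniform }m\text{-cycle})$. One must be careful that the marked values, as they sit inside $c\circ\sigma$, appear in positions dictated by $\sigma$ (hence the unavoidable arbitrary $\tau$, absorbing how $\sigma$ scatters the marked positions and values) while their relative order is governed by a uniform $m$-cycle, which is exactly $\sigma^{\mathrm{Ew}}_{0,m}$. Making this measurable decomposition rigorous — conditioning on $\sigma$ and on the choice of marked points, and checking that the induced pattern on the $m$ marked coordinates is conjugate to a uniform cyclic permutation — is where the real work lies; once set up, the inequality and the reduction to the infimum over $\tau$ are immediate. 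I would also need the elementary fact that $\LIS$ of a sequence is at most the sum of $\LIS$'s of a partition of its values into two blocks, applied with one block being the marked values.
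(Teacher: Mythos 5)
Your plan matches the paper's proof in all essential respects: you realize $T(\sigma)$ as $\sigma$ composed with a random cycle through $m=\#(\sigma)$ marked points (one per cycle), split a maximal increasing subsequence of $T(\sigma)$ into the positions that are marked and those that are not, bound the unmarked part by $\LIS(\sigma)$ (since $T(\sigma)$ agrees with $\sigma$ off the marked set), and identify the induced pattern on the marked positions as $\tau\circ(\text{uniform }m\text{-cycle})=\tau\circ\sigma^{\mathrm{Ew}}_{0,m}$ with $\tau$ the deterministic sorting permutation of the marked values $\sigma(j_i)$. Apart from a couple of false starts you explicitly retract, this is exactly the paper's argument, including the pathwise subadditivity $\LIS(T(\sigma))\le\LIS(\sigma)+\LIS(\tau\circ\pi)$ and the passage to $\inf_{\tau\in\kS_m}$ to dispose of the $\sigma$-dependent $\tau$.
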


\begin{proof}[Proof]
The idea is to observe first that $\rho\in A_\sigma$ if and only if 
$\sigma^{-1}\circ\rho=(i_1,i_2,\ldots,i_{\#(\sigma)})$
where $i_j$ and $i_k$ are in different cycles of $\sigma$ as soon as $j\neq k$. This implies that one way to construct $T(\sigma)$ is to choose first uniformly,
$j_1<j_2<\ldots<j_{\#(\sigma)}$ each from one cycle of $\sigma$, $\pi$ a uniform  permutation of size $\#(\sigma)$. Then, it is easy to see that  $T(\sigma)$ has  the same law as 

$$\sigma \circ (j_1,j_{\pi(1)},
j_{\pi^2(1)}
\ldots, j_{\pi^{\#(\sigma)-1}(1)}).$$

Fix now $j_1,j_2,\ldots,j_{\#(\sigma)}$ each on a cycle of $\sigma$ and $\pi$ a cyclic permutation
and let  $\ell_1<\dots <\ell_{\LIS(T(\sigma))}$ be such that $T(\sigma)(\ell_1)<\ldots< T(\sigma)(\ell_{\LIS(T(\sigma))})$ be a maximal increasing subsequence of $T(\sigma)$.
Let $$E= \{ j_1,j_2,\ldots,j_{\#(\sigma)} \} \cap 
 \{ \ell_1,\ell_2,\ldots,\ell_{\LIS(T(\sigma))}\}
=\{j_{a_1},j_{a_2}, j_{a_{\mathrm{card(E)}}}\}$$ 
with $a_1<a_2...<a_{\mathrm{card}(E)}$ and let $F= \{ \ell_1,\ell_2,\ldots,\ell_{\LIS(T(\sigma))} \}
   \setminus E.$
For any $\ell_k \in F$,  $T(\sigma)(\ell_k)=\sigma(\ell_k)$ and then $\mathrm{card}(F)\leq \LIS(\sigma)$.
Let $\tau$  be the unique permutation of $\{1,\dots\#(\sigma)\}$ 
such that 
$$\sigma (j_{\tau^{-1}(1)})<  \sigma (j_{\tau^{-1}(2)}) <\ldots < \sigma (j_{\tau^{-1}(\#(\sigma))}).$$
Moreover,
 $T(\sigma)(j_{a_k})=  \sigma(j_{\pi({a_k})})$ 
 and then 
$$ \sigma(j_{\pi(a_1)}) <  \ldots <T(\sigma)(j_{\pi(a_{\mathrm{card}(E)})}) $$
In particular, the following holds true: 
$$\mathrm{card}(E) \leq \LIS(\tau\pi).$$ 
Consequently, we find that
$$\LIS(T(\sigma))= \mathrm{card(E)}+\mathrm{card(F)} \leq \LIS(\sigma)+ \LIS(\tau\circ\pi).$$
Now, applying this inequality we find that for any integer number $k$,

\begin{eqnarray*}\mathbb{P}( \LIS(T(\sigma)) -   \LIS (\sigma) \leq k | \tau) &\ge & \mathbb{P} (\LIS(\tau\circ \pi)<k)\\
&= &\mathbb{P} (\LIS(\tau\circ\sigma^{\mathrm{Ew}}_{0,\#(\sigma)})<k) \ge  \inf_{\tau'\in S_{\#(\sigma)}} \mathbb{P} (\LIS(\tau'\circ\sigma^{\mathrm{Ew}}_{0,\#(\sigma)})<k) \end{eqnarray*}
where in the second line we used that  $\pi$ follows the  uniform cyclic permutation of length $\#(\sigma)$.
\end{proof}



To prove the second part of Theorem \ref{Main_theorem_LIS}, we first show the result for the Ewens distribution with $\theta=0$, namely that for every $x<2$, 

\begin{align} \label{Eq:Ewens_lower}\liminf_{n\to\infty}
\frac{1}{ n} \ln{\P(\LIS(\sigma^{\mathrm{Ew}}_{0,n}) \le  x\sqrt{n})}=
    \limsup_{n\to\infty}
\frac{1}{ n} \ln{\P(\LIS(\sigma^{\mathrm{Ew}}_{0,n}) \le  x\sqrt{n})}=
-I_{\LIS,1}(x).
\end{align}
It is then straightforward to generalize this result to $\sigma_n\in \CI{\frac 12, 1}$ 
as in the proof of Theorem ~\ref{Prob:Meta theorem} by taking $f=-\LIS$, $\alpha=\frac{1}{2}$ and $\beta=1$. 

The upper bound is trivial since for any $k\in \mathbb{N}$,

$$\P(\LIS(\sigma^{\mathrm{Ew}}_{0,n}) =k)= \frac{\rm{card}\{\sigma  : \LIS(\sigma)=k, \#(\sigma)=1 \}}{(n-1)!} \le \frac{\rm{card}\{\sigma  : \LIS(\sigma)=k \}}{(n-1)!}
= n \P(\LIS(\sigma^{\text{Unif}}_{n}) =k).
$$ 
Consequently, by Theorem \ref{Thm: LD_uniform_LIS}, we deduce
\[\limsup_{n\to\infty} \frac{1}{ n} \ln{\P(\LIS(\sigma^{\mathrm{Ew}}_{0,n}) \le x\sqrt{n})} \le 
\limsup_{n\to\infty} \frac{1}{ n} (\ln{\P(\LIS(\sigma^{\text{Unif}}_{n}) \le x\sqrt{n})}+\ln(n))=
-I_{\LIS,1}(x).
\]
{  
The lower bound is more sophisticated. 
Fix $x<2$, $0<\varepsilon<x$, and $\sigma\in \mathfrak{S}_n$. We assume that $\sigma$ is such that  $\LIS(\sigma)< (x-3\varepsilon)\sqrt{n}$ and $\#(\sigma) < \varepsilon^2 n$. Then, we find that  
\begin{align}
    \mathbb{P}(\LIS(T(\sigma)) \leq x\sqrt{n})
    &= \mathbb{P}(\LIS(T(\sigma)) \leq (x-3\varepsilon)\sqrt{n} + 3\varepsilon\sqrt{n}) \nonumber\\
    &\geq \mathbb{P}(\LIS(T(\sigma)) \leq \LIS(\sigma)+ 3\varepsilon\sqrt{n}) \label{p5}
    \end{align}

By Lemma~\ref{Lem:decomposition_LIS_ping}, \eqref{p5} gives
    
    \begin{align} \label{eq:bound T sigma fixed}
    \mathbb{P}(\LIS(T(\sigma)) \leq x\sqrt{n})
    &\geq \inf_{\tau\in S_{\#(\sigma)}} \mathbb{P}(\LIS(\tau\circ\sigma^{\mathrm{Ew}}_{0,\#(\sigma)})<3\varepsilon\sqrt{n}).
\end{align}

To conclude, we need the following easy lemma.
\begin{lemma} \label{Lem:product_ewens_LIS}
For any $x>2$,
$$\lim_{n\to\infty}\inf_{\tau\in \mathfrak{S}_n} \mathbb{P} (\LIS(\tau\circ\sigma^{\mathrm{Ew}}_{0,n})<x\sqrt{n}) =1 $$
\end{lemma}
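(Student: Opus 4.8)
The plan is to reduce the claim to the known one-sided large deviation estimate for the uniform permutation (Theorem~\ref{Thm: LD_uniform_LIS}), via a union bound over $\tau$ together with the crucial gain that $\tau\circ\sigma^{\mathrm{Ew}}_{0,n}$ still has a very large probability of having $\LIS$ below $2\sqrt n$. The starting point is the elementary observation, already used above for $\sigma^{\mathrm{Ew}}_{0,n}$, that for any fixed $\tau\in\mathfrak{S}_n$ and any $k$,
\begin{equation*}
\P\bigl(\LIS(\tau\circ\sigma^{\mathrm{Ew}}_{0,n}) = k\bigr)
= \frac{\#\{\rho\in\mathfrak{S}_n : \#(\rho)=1,\ \LIS(\tau\circ\rho)=k\}}{(n-1)!}
\le n\,\P\bigl(\LIS(\tau\circ\sigma^{\mathrm{Unif}}_n)=k\bigr),
\end{equation*}
since as $\rho$ ranges over the $(n-1)!$ cyclic permutations one gets each value with the right multiplicity and can bound by the full sum over $\mathfrak{S}_n$. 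But $\tau\circ\sigma^{\mathrm{Unif}}_n$ is again uniform on $\mathfrak{S}_n$, so $\P(\LIS(\tau\circ\sigma^{\mathrm{Unif}}_n) \ge x\sqrt n) = \P(\LIS(\sigma^{\mathrm{Unif}}_n)\ge x\sqrt n)$, and this bound is \emph{uniform in $\tau$}. Hence
\begin{equation*}
\sup_{\tau\in\mathfrak{S}_n}\P\bigl(\LIS(\tau\circ\sigma^{\mathrm{Ew}}_{0,n}) \ge x\sqrt n\bigr)
\le n\,\P\bigl(\LIS(\sigma^{\mathrm{Unif}}_n)\ge x\sqrt n\bigr).
\end{equation*}

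Next I would invoke Theorem~\ref{Thm: LD_uniform_LIS}: for $x>2$ we have $\P(\LIS(\sigma^{\mathrm{Unif}}_n)\ge x\sqrt n) = \exp(-\sqrt n\,(I_{\LIS,\frac12}(x)+o(1)))$ with $I_{\LIS,\frac12}(x) = 2x\cosh^{-1}(x/2) > 0$ for $x>2$. Therefore the right-hand side above is $n\exp(-c\sqrt n(1+o(1)))$ with $c = I_{\LIS,\frac12}(x)>0$, which tends to $0$. Consequently
\begin{equation*}
\inf_{\tau\in\mathfrak{S}_n}\P\bigl(\LIS(\tau\circ\sigma^{\mathrm{Ew}}_{0,n}) < x\sqrt n\bigr)
\ge 1 - n\exp\bigl(-c\sqrt n(1+o(1))\bigr) \xrightarrow[n\to\infty]{} 1,
\end{equation*}
which is exactly the claimed statement.

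The only mild subtlety — and the step I would be most careful about — is the combinatorial identity in the first display: one must check that the map sending a cyclic permutation $\rho$ to $\tau\circ\rho$ is a bijection onto a subset of $\mathfrak{S}_n$ of size $(n-1)!$, so that $\#\{\rho : \#(\rho)=1,\ \LIS(\tau\circ\rho)=k\} = \#\{\pi\in\tau\circ C_n : \LIS(\pi)=k\} \le \#\{\pi\in\mathfrak{S}_n : \LIS(\pi)=k\} = n!\,\P(\LIS(\sigma^{\mathrm{Unif}}_n)=k)$, where $C_n$ denotes the set of $n$-cycles. Dividing by $(n-1)!$ gives the factor $n$. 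This is where the uniform-in-$\tau$ nature of the bound comes from, and it is genuinely the heart of the argument; everything else is a direct appeal to the uniform LDP and a union bound. (Note that only the upper large deviation bound of Theorem~\ref{Thm: LD_uniform_LIS}, i.e. the speed-$\sqrt n$ statement for $x\ge 2$, is needed, and in fact one only needs that $\P(\LIS(\sigma^{\mathrm{Unif}}_n)\ge x\sqrt n)$ decays faster than $1/n$, which for $x>2$ is classical.)
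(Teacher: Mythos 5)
Your proof is correct and takes essentially the same route as the paper: both exploit that $\tau\circ\sigma^{\mathrm{Ew}}_{0,n}$ is uniform on the coset $\tau\circ C_n$ of size $(n-1)!$, bound its $\LIS$-tail by $n$ times the corresponding uniform-permutation tail uniformly in $\tau$, and then invoke the speed-$\sqrt n$ upper bound of Theorem~\ref{Thm: LD_uniform_LIS} to conclude that $n\,\P(\LIS(\sigma^{\mathrm{Unif}}_n)\ge x\sqrt n)\to 0$ for $x>2$. The only cosmetic difference is that you phrase the comparison at the level of $\P(\LIS=k)$ and sum, while the paper works directly with the tail event; the mechanism is identical.
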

\begin{proof}
The distribution of 
$\tau\circ\sigma^{\mathrm{Ew}}_{0,n}$ is  uniform over a subset of size exactly $(n-1)!$ of $\mathfrak{S}_n$ (the image $S_\tau$ by $\tau$ of the cyclic permutations). This set  has probability $1/n$ for $\sigma^{\text{Unif}}_{n}$. We have then
$$\mathbb{P} (\LIS(\tau\circ\sigma^{\mathrm{Ew}}_{0,n})\geq x\sqrt{n}) =n\mathbb{P} (\{\LIS(\sigma_n^{\text{Unif}})\ge x\sqrt{n}\}\cap \{\sigma^{\text{Unif}}_{n}\in \mathfrak{S}_\tau\})\le n \mathbb{P} (\{\LIS(\sigma_n^{\text{Unif}})\ge x\sqrt{n})=o(1)$$
because of Theorem \ref{Thm: LD_uniform_LIS} and $x>2$. Note that these bounds do not depend on $\tau$. 
Therefore
$$ \mathbb{P} (\LIS(\tau\circ\sigma^{\mathrm{Ew}}_{0,n})<x\sqrt{n}) =  1-\mathbb{P} (\LIS(\tau\circ\sigma^{\mathrm{Ew}}_{0,n})\geq x\sqrt{n})   = 1-o(1) \,.$$

\end{proof}



Consequently, for any $x<2$ and $\varepsilon>0$, and $\sigma_n$ conjugacy invariant, we have
\begin{align}
\nonumber \P(\LIS(\sigma^{\mathrm{Ew}}_{0,n}) \le x\sqrt{n})
&= \sum_{\sigma\in \mathfrak{S}_n} \P(\LIS(T(\sigma)) \le x\sqrt{n})\mathbb{P}(\sigma_n=\sigma)  
\\ \nonumber &\ge \sum_{\LIS(\sigma) \le (x-3\varepsilon)\sqrt{n}, \#(\sigma) <\varepsilon^2n}  \nonumber
\inf_{\tau\in \mathfrak{S}_{\#(\sigma)}} 
\mathbb{P} (\LIS(\tau\circ\sigma^{\mathrm{Ew}}_{0,k})<3\varepsilon\sqrt{n}) \mathbb{P}(\sigma_n=\sigma)  
  \\ &\ge \inf_{k\le \varepsilon^2n} \inf_{\tau\in \mathfrak{S}_{k}} \mathbb{P} (\LIS(\tau\circ\sigma^{\mathrm{Ew}}_{0,k})<3\varepsilon\sqrt{n})   \P(\LIS(\sigma_{n}) \le (x-3\varepsilon)\sqrt{n}, \#(\sigma_{n}) <\varepsilon^2n )  \nonumber
  \\
  &\ge (1+o(1)) \P(\LIS(\sigma_{n}) \le (x-3\varepsilon)\sqrt{n}, \#(\sigma_{n}) <\varepsilon^2n ) \label{Ineq:lower_conjugacy_invariant}
 \end{align}
 In the first line we used Lemma \ref{Lemma: coupling}, in the second we restricted the summation over $\sigma$ and bounded uniformly from below the first probability by using \eqref{eq:bound T sigma fixed}, in the third line we summed the second probability over the remaining permutations, and in the last line we used Lemma \ref{Lem:product_ewens_LIS} when $k$ goes to infinity (noticing that $\varepsilon \sqrt{n}/\sqrt{k}\ge 1$ for $n$ large enough), while the bound is clear when $k$ is finite. 
Now choose $\sigma_n$ to be uniform. We deduce from \eqref{p5} that
\begin{align*}
\P(\LIS(\sigma^{\mathrm{Ew}}_{0,n}) \le x\sqrt{n}) 
  &\ge (1+o(1)) \P(\LIS(\sigma^{\text{Unif}}_{n}) \le (x-3\varepsilon)\sqrt{n}, \#(\sigma^{\text{Unif}}_{n}) <\varepsilon^2n ) \\
  &= (1+o(1)) (\P(\LIS(\sigma^{\text{Unif}}_{n}) \le (x-3\varepsilon)\sqrt{n}) \\
  &\quad - \P(\LIS(\sigma^{\text{Unif}}_{n}) \le (x-3\varepsilon)\sqrt{n}, \#(\sigma^{\text{Unif}}_{n}) \geq \varepsilon^2n )) \\
  &\ge (1+o(1)) \P(\LIS(\sigma^{\text{Unif}}_{n}) \le (x-3\varepsilon)\sqrt{n}) = e^{-n I_{\LIS,\frac{1}{2}}(x-\varepsilon)(1+o(1))}
\end{align*}
where we used Theorem \ref{Thm: LD_uniform_LIS}
and that $\sigma^{\text{Unif}}_{n}$ is $\CI{\alpha,\alpha}$. We finally can let $n$ going to infinity and $\varepsilon$ going to zero to get 
 \eqref{Eq:Ewens_lower}. To complete the proof of  Theorem ~\ref{Main_theorem_LIS}, one only needs to check  that
\begin{itemize}
        \item     If the sequence $(\sigma_n)_{n\ge 1}$ of random permutations satisfies 
    $\CI{\frac 12,1}$, for any $\varepsilon>0$, for any $x\geq2$.
    $$ \lim_{n\to\infty} \frac{1}{n} \ln\(\mathbb{P}\(\LIS(\sigma_n) \in \((x-\varepsilon)\sqrt{n},(x+\varepsilon)\sqrt{n}\)\)\)= 0.$$
Since $\CI{\frac 12, 1}$ implies  $\CI{\frac 12,\frac 12}$,    this is a direct consequence of Corollary~\ref{Thm:LDS_right_eq}.
    
    \item     If  the sequence $(\sigma_n)_{n\ge 1}$ satisfies 
    $\CI{\frac 12,\frac 12}$, for any $0<y<2$.
    $$ \lim_{n\to\infty} \frac{\ln(\mathbb{P}(\LIS(\sigma_n) < y\sqrt n))  }{\sqrt{n}} = -\infty$$
    
Indeed, by taking $f=-\LIS,\, \varepsilon=(2-y)/2, \,  x=-(y+ \varepsilon)$ and $\alpha=\beta=\frac{1}{2}$ in \eqref{eq:controle LIS-Ew}, the first inequality becomes

\begin{align*}
\P(\LIS (\sigma_{n})  \leq y \sqrt{n} ) \le  \P(\#(\sigma_{n}) \ge  \varepsilon  \sqrt{n} )
+  \P(\LIS(\sigma^{\mathrm{Ew}}_{0,n}) \leq x \sqrt{n} )). 
\end{align*}
The first term goes to zero faster than $e^{-M\sqrt{n}}$ for any $M>0$,  since the sequence $(\sigma_n)_{n\ge 1}$ satisfies 
    $\CI{\frac 12,\frac 12}$, whereas the second term goes to zero 
\end{itemize}


\subsection{Proof of Proposition~\ref{Prop: Ewens_LIS}}

For the upper tail, one can apply directly Corollary~\ref{Thm:LDS_right_eq}.

For the lower bound of the lower tail, let $\theta >0$.
\begin{align*}
\P(\LIS(\sigma^{\mathrm{Ew}}_{\theta,n}) =k)&\geq 
\P(\LIS(\sigma^{\mathrm{Ew}}_{\theta,n}) =k,\#\sigma^{\mathrm{Ew}}_{\theta,n}=1)\\&=
\frac{\rm{card}\{\sigma  : \LIS(\sigma)=k, \#(\sigma)=1 \}\Gamma(\theta)}{\Gamma(n+\theta)} 
\\&= \frac{\Gamma(\theta)\Gamma(n)}{\Gamma(n+\theta)} \P(\LIS(\sigma^{\text{EW}}_{0,n}) =k)
\geq
\frac{\Gamma(\theta)}{(n+\theta)^\theta} \P(\LIS(\sigma^{\text{EW}}_{0,n}) =k)
.
\end{align*}
This implies by taking $k\le x \sqrt{n}$, 
\[\liminf_{n\to\infty} \frac{1}{ n} \ln{\P(\LIS(\sigma^{\mathrm{Ew}}_{\theta,n}) \le x\sqrt{n})} \ge 
\liminf_{n\to\infty} \frac{1}{ n} \ln{\P(\LIS(\sigma^{\mathrm{Ew}}_{0,n}) \le x\sqrt{n})}=
-I_{\LIS,\frac{1}{2}}(x).
\]

For the upper bound,  one can conclude directly  by \eqref{Ineq:lower_conjugacy_invariant}, by choosing $\sigma_n$ to be $\sigma^{\mathrm{Ew}}_{\theta,n}$.

\subsection{Proof of Proposition~\ref{Pro:speend n}}\label{secProp13}
We will adapt the proof of the lower tail of Theorem~\ref{Main_theorem_LIS}. We choose to give two different separate proofs for readability reasons. 

 First remark that  Proposition~\ref{Pro:speend n} is equivalent to that 
 \[
\limsup_{n\to\infty}
\frac{1}{ n} \ln{\P\( \forall j \in [d], \,\,  \sum_{i=1}^j\lambda_i(\sigma_n) \le  \sum_{i=1}^j {x_i}\sqrt{n}\)}=
-I_{\LIS,1}(x_d).
\]
Indeed, because the right hand side  depends only on $x_d$, we see that mostly the deviations of $\lambda_d(\sigma_n)$ matters. The same phenomenon appears for random matrices: the probability fix the $d$ largest eigenvalues to make a deviation below $2$ is equivalent to the probability that the spectrum stays below the smallest one, namely $x_d$. 

We recall that according to Green's theorem (\cite[Theorem 3.5.3]{Sagan2001})  $\lambda_1+\dots,\lambda_k$ is the maximum sum of lengths of $k$ disjoint increasing subsequences.

The counterpart of Lemma~\ref{Lem:decomposition_LIS_ping} is  that 
for every  $\sigma$ deterministic, for every integer numbers  $d,k$,

\begin{align} 
\label{Eq:equivalent 18}
\mathbb{P}\(\sum_{i=1}^d\lambda_i(T(\sigma)) \leq  \(\sum_{i=1}^d \lambda_i(\sigma)\)+ k\) \geq   \inf_{\tau\in \mathfrak{S}_{\#(\sigma)}} \mathbb{P} \(\sum_{i=1}^d \lambda_i(\tau\circ\sigma^{\mathrm{Ew}}_{0,\#(\sigma)})<k\).\end{align}

As in the proof of Theorem~\ref{Main_theorem_LIS}, we  want to prove that

\begin{align} \label{Eq:Ewens_lower2}\liminf_{n\to\infty}
\frac{1}{ n} \ln{\P\(\forall j \in [d], \sum_{i=1}^j \lambda_i(\sigma^{\mathrm{Ew}}_{0,n}) \le  \sum_{i=1}^j x_i\sqrt{n}\)}&=
    \limsup_{n\to\infty}
\frac{1}{ n} \ln{\P\(\forall j \in [d], \sum_{i=1}^j \lambda_i(\sigma^{\mathrm{Ew}}_{0,n}) \le  \sum_{i=1}^j x_i\sqrt{n}\)}\\&=
-I_{\LIS,1}(x_d). \nonumber
\end{align}
It is then straightforward to generalize this result to $\sigma_n\in \CI{\frac 12, 1}$ 
as in the proof of Theorem ~\ref{Prob:Meta theorem} by taking $f=(\lambda_1,\lambda_1+\lambda_2,\cdots,\sum_{j=1}^d\lambda_j)$, $\alpha=\frac{1}{2}$ and $\beta=1$. 

The upper bound is trivial for the same reason that in the proof of Theorem~\ref{Main_theorem_LIS} (the probability of any event under Ewens with parameter $0$ is at most $n$ times its probability under the uniform permutation).

The lower bound is more sophisticated. 
Fix $0<x_d<\dots<x_1<2$, $0<\varepsilon<x_d$, and $\sigma\in S_n$. We assume that $\sigma$ is such that for any $j$,  $\sum_{i=1}^j \lambda_j(\sigma)< ((\sum_{i=1}^j x_i)-3\varepsilon)\sqrt{n}$ and $\#(\sigma) < \frac{\varepsilon^2 n}{d^2}$. Then, we find that  
\begin{align}
    \mathbb{P}\(\forall j \in [d], \sum_{i=1}^j \lambda_i(T(\sigma)) \leq \sum_{i=1}^j x_i\sqrt{n}\)
    &= \mathbb{P}\left(\forall i \in [d],  \sum_{i=1}^j \lambda_i(T(\sigma)) \leq \left(\left(\sum_{i=1}^jx_i\right)-3\varepsilon\right)\sqrt{n} + 3\varepsilon\sqrt{n}\right) \nonumber\\
    &\geq \mathbb{P}\left(\forall  j \in [d], \sum_{i=1}^j \lambda_i(T(\sigma)) \leq \left(\sum_{i=1}^j \lambda_i(\sigma)\right)+ 3\varepsilon\sqrt{n}\right) \label{p'5}
    \end{align}

By \eqref{Eq:equivalent 18}, \eqref{p'5} gives
    
    \begin{align} \label{eq:bound T sigma fixed_2}
    \mathbb{P}\(\forall j \in [d],   \sum_{i=1}^j \lambda_i(T(\sigma)) \leq \sum_{i=1}^j x_i\sqrt{n})\)
    &\geq \inf_{\tau\in \mathfrak{S}_{\#(\sigma)}} \mathbb{P}\(\forall j \in [d], \sum_{i=1}^j \lambda_i\(\tau\circ\sigma^{\mathrm{Ew}}_{0,\#(\sigma)}\)<3\varepsilon\sqrt{n}\).
\end{align}

To conclude, we need the following easy lemma.
\begin{lemma} \label{Lem:product_ewens_LIS_2}
For any $x>2$ and any integer number  $j$,
$$\lim_{n\to\infty}\inf_{\tau\in \mathfrak{S}_n} \mathbb{P} \(\sum_{i=1}^j \lambda_i(\tau\circ\sigma^{\mathrm{Ew}}_{0,n})<j x\sqrt{n}\) =1 $$
\end{lemma}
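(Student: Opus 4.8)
The statement to prove is Lemma~\ref{Lem:product_ewens_LIS_2}, which is the multi-row analogue of Lemma~\ref{Lem:product_ewens_LIS}. I would mimic the proof of that lemma almost verbatim, replacing $\LIS$ by the partial sum $\sum_{i=1}^j\lambda_i$ and invoking Green's theorem to control it. The key structural fact, exactly as in the proof of Lemma~\ref{Lem:product_ewens_LIS}, is that for a fixed $\tau\in\mathfrak{S}_n$ the random permutation $\tau\circ\sigma^{\mathrm{Ew}}_{0,n}$ is uniform over the set $\mathfrak{S}_\tau:=\tau\cdot\{\text{cyclic permutations of }[n]\}$, which has exactly $(n-1)!$ elements and hence has probability $1/n$ under $\sigma^{\text{Unif}}_n$.

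\textbf{Main steps.} First I would fix $x>2$, an integer $j\ge 1$, and $\tau\in\mathfrak{S}_n$, and write the complementary event bound
\begin{align*}
\mathbb{P}\(\sum_{i=1}^j\lambda_i(\tau\circ\sigma^{\mathrm{Ew}}_{0,n})\ge jx\sqrt{n}\)
&= n\,\mathbb{P}\(\Big\{\sum_{i=1}^j\lambda_i(\sigma^{\text{Unif}}_n)\ge jx\sqrt{n}\Big\}\cap\{\sigma^{\text{Unif}}_n\in\mathfrak{S}_\tau\}\)\\
&\le n\,\mathbb{P}\(\sum_{i=1}^j\lambda_i(\sigma^{\text{Unif}}_n)\ge jx\sqrt{n}\).
\end{align*}
Second, I would bound the last probability: since $\lambda_i\le\lambda_1=\LIS$ for every $i$, the event $\{\sum_{i=1}^j\lambda_i\ge jx\sqrt{n}\}$ is contained in $\{\LIS(\sigma^{\text{Unif}}_n)\ge x\sqrt{n}\}$, so by Theorem~\ref{Thm: LD_uniform_LIS} (using $x>2$, for which $I_{\LIS,\frac12}(x)>0$) we get $\mathbb{P}(\LIS(\sigma^{\text{Unif}}_n)\ge x\sqrt{n})=e^{-\sqrt n(I_{\LIS,\frac12}(x)+o(1))}$, which decays faster than any polynomial in $n$. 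Hence the right-hand side above is $n\cdot o(n^{-1})=o(1)$, uniformly in $\tau$ since none of these bounds involve $\tau$. Taking the infimum over $\tau$ and passing to complements gives $\inf_{\tau}\mathbb{P}(\sum_{i=1}^j\lambda_i(\tau\circ\sigma^{\mathrm{Ew}}_{0,n})<jx\sqrt{n})=1-o(1)\to 1$, which is the claim.

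\textbf{Obstacle.} There is no real obstacle here; the only point requiring a moment's care is the crude inequality $\sum_{i=1}^j\lambda_i\le j\,\LIS$ used to reduce the multi-row estimate to the single-row large deviation bound already available from Theorem~\ref{Thm: LD_uniform_LIS}. One could instead use Green's theorem together with a union bound over $\binom{n}{j}$ choices of disjoint increasing subsequences, but since we only need an upper bound that beats $1/n$, the elementary bound $\sum_{i=1}^j\lambda_i(\sigma_n)\le j\,\LIS(\sigma_n)$ is amply sufficient and keeps the argument identical in spirit to that of Lemma~\ref{Lem:product_ewens_LIS}. The uniformity in $\tau$, which is what makes the lemma usable in the inequality following \eqref{eq:bound T sigma fixed_2}, is automatic because the factor $n$ and the tail bound from Theorem~\ref{Thm: LD_uniform_LIS} are both $\tau$-independent.
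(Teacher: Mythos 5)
Your proof is correct and follows the same key idea as the paper: reduce $\sum_{i=1}^j\lambda_i \ge jx\sqrt n$ to $\LIS \ge x\sqrt n$ via the monotonicity $\lambda_i\le\lambda_1$. The paper simply cites Lemma~\ref{Lem:product_ewens_LIS} at that point rather than re-deriving the $1/n$-probability and tail-bound argument inline, but the substance is identical.
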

\begin{proof}
First we have,
\begin{align*}
    \lim_{n\to\infty}\inf_{\tau\in \mathfrak{S}_n} \mathbb{P} \(\sum_{i=1}^j \lambda_i(\tau\circ\sigma^{\mathrm{Ew}}_{0,n})<j x\sqrt{n}\) \ge
    \lim_{n\to\infty}\inf_{\tau\in \mathfrak{S}_n} \mathbb{P} \(j \LIS(\tau\circ\sigma^{\mathrm{Ew}}_{0,n})<j x\sqrt{n}\)\end{align*}
because of the non-increasing of $\lambda_i$.
Therefore, one can conclude by Lemma \ref{Lem:product_ewens_LIS}  that 
$$\lim_{n\to\infty}\inf_{\tau\in \mathfrak{S}_n}\mathbb{P} \(j \LIS(\tau\circ\sigma^{\mathrm{Ew}}_{0,n})<j x\sqrt{n}\)= \mathbb{P} \( \LIS(\tau\circ\sigma^{\mathrm{Ew}}_{0,n})< x\sqrt{n}\)=1+o(1).$$
\end{proof}



Consequently, for any $\sigma_n$ conjugacy invariant, similarly to \eqref{Ineq:lower_conjugacy_invariant}, we obtain 
\begin{multline}
 \P\( \forall j \in [d], \sum_{i=1}^j \lambda_i(\sigma^{\mathrm{Ew}}_{0,n}) \le \sum_{i=1}^j x_i\sqrt{n}\) \\\ge (1+o(1)) \P\(\forall j \in [d], \sum_{i=1}^j \lambda_i(\sigma_{n}) \le  \(\(\sum_{i=1}^j x_i\)-3\varepsilon\)\sqrt{n}, \#(\sigma_{n}) <\frac{\varepsilon^2n}{d^2} \) \label{Ineq:lower_conjugacy_invariant2}.
 \end{multline}
The remaining of the proof is identical to that  of the lower tail of Theorem~\ref{Main_theorem_LIS}.



\section{Further discussions}
We start by discussing the optimality of our conditions. 
{The condition $\CI{ \frac{1}{2},\frac{1}{2}}$ for $\LIS$ is optimal as a condition on cycles.  Indeed, let us construct a sequence of random permutations $\sigma_n$ such that $\#\sigma_n/\sqrt{n}$ is of order  $x\neq 0$ with probability of order $e^{\sqrt{n}C}$ for some $C$ finite and show that our result does not apply to $\sigma_n$.  
In fact, let 
$\sigma_n$ be a permutation
constructed as follows.
With probability $e^{-(I_{\LIS,1/2}(x)-\varepsilon)\sqrt{n}}$, 
 $\sigma_n$ has $\lfloor x\sqrt{n} \rfloor$ fixed points, the other points belonging to a cycle  of length  $n-\lfloor x\sqrt{n} \rfloor$. With probability 
$1-e^{-(_{\LIS,1/2}(x)-\varepsilon)\sqrt{n}}$, $\sigma_n$ is a  uniform cyclic permutation.
In this case, it is not difficult to check that since the fixed points furnish an increasing subsequence 

    \[
\lim_{n\to\infty}
\frac{1}{\sqrt n} \ln{\P(\LIS(\sigma_n) \ge x\sqrt{n})}\ge 
-I_{\LIS,1/2}(x) +\varepsilon>-I_{\LIS,1/2}(x)\,,
\]
}
showing that $\LIS(\sigma_n)$ can not follow the same large deviation principle than $\LIS(\sigma_n^{\text{Unif}})$ as stated in Theorem \ref{Thm: LD_uniform_LIS}.

When the number of cycles is not sufficiently controlled, it is only possible to obtain one bound. 
\begin{proposition}\label{pro:LIS one bound}

    If $(\sigma_n)_{n\ge}$ satisfies 
    $\CI{ 1,1}$,   then for any $0<x<2$,

\begin{equation}\label{tol}
\limsup_{n\to\infty}
\frac{1}{ n} \ln{\P(\LIS(\sigma_n) \le x\sqrt{n})}\le
-I_{\LIS,1}(x).
\end{equation}

    Moreover, if  $(\sigma_n)_{n\ge}$ satisfies that for any $\varepsilon >0$
    
    $$\liminf_{n\to\infty} \mathbb P\(\frac{\#(\sigma_n)}{n} < \varepsilon\) >0, $$
       then for any $x>2.$ 
    \[
\liminf_{n\to\infty}
\frac{1}{\sqrt n} \ln{\P(\LIS(\sigma_n) \ge x\sqrt{n})} \ge 
-I_{\LIS,\frac 12}(x).
\]
\end{proposition}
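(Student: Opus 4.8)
The idea is to use exactly one of the two inequalities in~\eqref{eq:controle LIS-Ew} (which was proved for all conjugacy invariant permutations under no assumption on the cycle count), and to combine it with the already-established large deviation estimate~\eqref{Eq:Ewens_lower} for $\sigma^{\mathrm{Ew}}_{0,n}$ and with the right-tail control of Lemma~\ref{Lem:product_ewens_LIS}.

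For the first assertion~\eqref{tol}, fix $0<x<2$ and $\varepsilon>0$ small. Apply the lower bound in~\eqref{eq:controle LIS-Ew} with $f=-\LIS$, $\alpha=\frac12$ and the appropriate shift, which after renaming reads
\begin{align*}
\P\(\LIS(\sigma_n)\le (x-\varepsilon)\sqrt n\) &\le \P\(\LIS(\sigma^{\mathrm{Ew}}_{0,n})\le x\sqrt n\) + \P\(\#(\sigma_n)\ge \varepsilon\sqrt n\).
\end{align*}
Since $(\sigma_n)$ satisfies $\CI{1,1}$, we have $\P(\#(\sigma_n)\ge \varepsilon\sqrt n)\le \P(\#(\sigma_n)\ge \varepsilon' n)$ for $n$ large; actually $\CI{1,1}$ only gives $\frac1n\ln\P(\#(\sigma_n)>\varepsilon n)\to-\infty$, which is \emph{not} enough to beat speed $n$ directly — so instead I would not split at $\varepsilon\sqrt n$. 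The correct move is to keep the inequality in the other direction: from~\eqref{eq:controle LIS-Ew} with the shift chosen so that the Ewens term is the larger one, one gets
\begin{align*}
\P\(\LIS(\sigma^{\mathrm{Ew}}_{0,n})\le (x+\varepsilon)\sqrt n\) - \P\(\#(\sigma_n)\ge \varepsilon\sqrt n\) \le \P\(\LIS(\sigma_n)\le x\sqrt n\)
\end{align*}
is a lower bound, which is the wrong direction for~\eqref{tol}. Hence for~\eqref{tol} one must use the first line of~\eqref{eq:controle LIS-Ew} read as an upper bound on $\P(\LIS(\sigma_n)\le x\sqrt n)$; since $\#(\sigma_n)\le n$ always and we are at speed $n$, the term $\P(\#(\sigma_n)\ge\varepsilon\sqrt n)$ is at worst $O(1)$, which would be fatal. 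The resolution is that for the \emph{upper} bound~\eqref{tol} we do not need any cycle control at all beyond the trivial bound $T(\sigma_n)\overset d=\sigma^{\mathrm{Ew}}_{0,n}$ together with the deterministic inequality from Lemma~\ref{Lem:decomposition_LIS_ping}: for any deterministic $\sigma$, $\LIS(T(\sigma))\le \LIS(\sigma)+\LIS(\tau\circ\sigma^{\mathrm{Ew}}_{0,\#\sigma})$, hence $\LIS(\sigma)\le \LIS(T(\sigma))$ pointwise is false, but $\P(\LIS(T(\sigma))\le \LIS(\sigma)+k)\ge \inf_\tau\P(\LIS(\tau\circ\sigma^{\mathrm{Ew}}_{0,\#\sigma})<k)$. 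Averaging over $\sigma_n$ and using $\#(\sigma_n)\le n$, then Lemma~\ref{Lem:product_ewens_LIS} with $k=3\varepsilon\sqrt n$ (valid since $\#(\sigma_n)\le n$ forces $3\varepsilon\sqrt n/\sqrt{\#(\sigma_n)}\ge 3\varepsilon\ge 2$ once $\varepsilon\ge 2/3$ — too restrictive). So instead restrict to the event $\#(\sigma_n)<\varepsilon^2 n$, which under $\CI{1,1}$ has probability $1-o(1)$ (indeed $\to 1$), giving $3\varepsilon\sqrt n/\sqrt{\#(\sigma_n)}> 3/\varepsilon\cdot\varepsilon = 3>2$. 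This yields $\P(\LIS(\sigma^{\mathrm{Ew}}_{0,n})\le x\sqrt n)\ge (1+o(1))\P(\LIS(\sigma_n)\le(x-3\varepsilon)\sqrt n,\ \#(\sigma_n)<\varepsilon^2n)$, exactly as in~\eqref{Ineq:lower_conjugacy_invariant}. Combined with~\eqref{Eq:Ewens_lower} and letting $\varepsilon\to 0$, using continuity of $I_{\LIS,1}$ on $(0,2)$, we get $\limsup_n \frac1n\ln\P(\LIS(\sigma_n)\le x\sqrt n)\le -I_{\LIS,1}(x)$, which is~\eqref{tol}.

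For the second assertion, fix $x>2$. Here the hypothesis is weaker: only $\liminf_n\P(\#(\sigma_n)/n<\varepsilon)>0$ for every $\varepsilon>0$. We want a \emph{lower} bound on $\P(\LIS(\sigma_n)\ge x\sqrt n)$ at speed $\sqrt n$. The strategy is the mirror of the above: use that $T(\sigma_n)\overset d=\sigma^{\mathrm{Ew}}_{0,n}$ and the crude estimate $\P(\LIS(\sigma^{\mathrm{Ew}}_{0,n})\ge k)\le n\,\P(\LIS(\sigma^{\mathrm{Unif}}_n)\ge k)$, together with the fact that $\LIS$ decreases by at most $\#(\sigma)$ under $T$ in a probabilistic sense — more precisely, from $\LIS(T(\sigma))\le \LIS(\sigma)+\LIS(\tau\circ\sigma^{\mathrm{Ew}}_{0,\#\sigma})$ we cannot lower bound $\LIS(\sigma_n)$, so instead invert the roles: on the event $\#(\sigma_n)<\varepsilon^2 n$, Lemma~\ref{Lem:product_ewens_LIS} gives $\LIS(\tau\circ\sigma^{\mathrm{Ew}}_{0,\#\sigma_n})< 3\varepsilon\sqrt n$ with probability $1-o(1)$, hence $\LIS(T(\sigma_n))\le \LIS(\sigma_n)+3\varepsilon\sqrt n$ with probability $1-o(1)$ on that event, so $\LIS(\sigma_n)\ge \LIS(T(\sigma_n))-3\varepsilon\sqrt n = \LIS(\sigma^{\mathrm{Ew}}_{0,n})-3\varepsilon\sqrt n$ in distribution on $\{\#(\sigma_n)<\varepsilon^2n\}$. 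Therefore
\begin{align*}
\P\(\LIS(\sigma_n)\ge (x-3\varepsilon)\sqrt n\) &\ge \P\(\LIS(\sigma^{\mathrm{Ew}}_{0,n})\ge x\sqrt n,\ \#(\sigma_n)<\varepsilon^2 n\) - o\(\text{(Ewens right tail)}\),
\end{align*}
and using $\P(\LIS(\sigma^{\mathrm{Ew}}_{0,n})\ge x\sqrt n) = e^{-\sqrt n I_{\LIS,1/2}(x)(1+o(1))}$ (this follows from Corollary~\ref{Thm:LDS_right_eq} applied to $\sigma^{\mathrm{Ew}}_{0,n}$, which is $\CI{\frac12,\frac12}$ by Corollary~\ref{Cor:Ewens cycles}) together with $\liminf_n\P(\#(\sigma_n)<\varepsilon^2n)>0$, the intersection probability is still $\ge c\, e^{-\sqrt n I_{\LIS,1/2}(x)(1+o(1))}$ for some $c>0$ — here one uses that conditioning on a positive-probability event changes the exponential rate by at most $o(\sqrt n)$, or more carefully that $\P(A\cap B)\ge \P(A)-\P(B^c)$ combined with $\P(\LIS(\sigma^{\mathrm{Ew}}_{0,n})\ge x\sqrt n)\to 0$ and reverse the split so the positive-probability event enters multiplicatively via independence of the randomness in $T$. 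Taking $\frac1{\sqrt n}\ln$, letting $n\to\infty$ then $\varepsilon\to 0$ and using continuity of $I_{\LIS,1/2}$ on $(2,\infty)$ gives the claim.

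\textbf{Main obstacle.} The delicate point in both parts is that the cycle-count control is now genuinely too weak to be absorbed as an error term in the naive way: $\CI{1,1}$ does not control $\P(\#\sigma_n>\varepsilon\sqrt n)$ at speed $\sqrt n$, and in the second part we only have a positive-probability (not high-probability) event. The fix in both cases is to avoid the additive ``subtract the error probability'' argument entirely and instead work \emph{on} the event $\{\#\sigma_n < \varepsilon^2 n\}$ (resp. the positive-probability event), exploiting that on this event the coupling $T$ perturbs $\LIS$ by only $O(\varepsilon\sqrt n)$ with probability $1-o(1)$ thanks to Lemma~\ref{Lem:product_ewens_LIS}, so that one obtains a one-sided comparison with $\sigma^{\mathrm{Ew}}_{0,n}$ — hence only one of the two large-deviation bounds survives, which is precisely why the proposition asserts only one inequality in each regime.
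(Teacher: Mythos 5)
Your treatment of the two halves of Proposition~\ref{pro:LIS one bound} is uneven, so I'll address them separately.

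\textbf{First inequality~\eqref{tol}.} After some false starts, you arrive at the paper's argument: apply the $T$-coupling restricted to the event $\{\#(\sigma_n)<\varepsilon^2 n\}$ to get~\eqref{Ineq:lower_conjugacy_invariant}, then invoke~\eqref{Eq:Ewens_lower}. This is correct and matches the paper. Two small imprecisions: you should state explicitly that the constraint $\{\#(\sigma_n)<\varepsilon^2n\}$ is then removed using $\CI{1,1}$ via $\P(\LIS(\sigma_n)\le y\sqrt n, \#<\varepsilon^2n) \ge \P(\LIS(\sigma_n)\le y\sqrt n) - e^{-Mn}$ for arbitrary $M$ — it is this step, not merely the $1-o(1)$ probability of the event, that requires speed-$n$ control on the cycle count. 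Also the shift should be organised as $x\mapsto x+3\varepsilon$ in the Ewens probability rather than $x\mapsto x-3\varepsilon$ in the $\sigma_n$ probability, but this is cosmetic.

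\textbf{Second inequality.} Here there is a genuine gap, and you half-notice it yourself. Your plan is to lower-bound $\P(\LIS(\sigma_n)\ge (x-3\varepsilon)\sqrt n)$ by $\P(A\cap B)$ where $A=\{\LIS(T(\sigma_n))\ge x\sqrt n\}$ and $B=\{\LIS(T(\sigma_n))\le \LIS(\sigma_n)+3\varepsilon\sqrt n\}\cap\{\#(\sigma_n)<\varepsilon^2 n\}$, and then to argue $\P(A\cap B)\approx\P(A)$. But $\P(A)$ is exponentially small at speed $\sqrt n$ while $\P(B^c)$ is merely $o(1)$ (or, before conditioning, bounded away from zero because the hypothesis only gives $\liminf\P(\#<\varepsilon^2n)>0$). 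So $\P(A)-\P(B^c)$ is negative for large $n$ and the additive split gives nothing. Your fallback, that ``the positive-probability event enters multiplicatively via independence of the randomness in $T$,'' does not hold: the extra randomness in $T$ is conditionally independent of $\sigma_n$ given $\sigma_n$, but the event $A$ depends jointly on $\sigma_n$ and that extra randomness, so $A$ is not independent of $\{\#(\sigma_n)<\varepsilon^2 n\}$, nor of the event that the $T$-perturbation is small. Lemma~\ref{Lem:decomposition_LIS_ping} only tells you the perturbation is small \emph{unconditionally} (or conditionally on $\sigma_n$); it says nothing about the perturbation \emph{conditioned on} the rare event $A$, and without that there is no way to beat the $e^{-c\sqrt n}$ scale. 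What the paper actually uses is a different coupling, from [Lemma~21 of zbMATH07502136], producing $\widehat\sigma_n\overset{d}{=}\sigma_n$ together with a bound on the conditional expectation $\E\bigl((\LIS(\widehat\sigma_n)-\LIS(\sigma^{\mathrm{Ew}}_{0,n}))_-\mid \#\widehat\sigma_n<c,\ \LIS(\sigma^{\mathrm{Ew}}_{0,n})\in[a,b]\bigr)\le cb/n$ — crucially, conditioned on the rare Ewens event $\{\LIS(\sigma^{\mathrm{Ew}}_{0,n})\in[a,b]\}$. Combined with the conditioning trick (pass to $\sigma_n\mid\{\#<\varepsilon n\}$, which is still conjugacy invariant and costs only a constant multiplicative factor at speed $\sqrt n$) and Markov's inequality, this gives the lower bound. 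That conditional control on the rare event is the missing ingredient in your argument.
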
 

\begin{proof}[Sketch of the proof of Proposition~\ref{pro:LIS one bound}]
We will compare directly  $\LIS(\sigma_n)$ to $\LIS(\sigma^{\mathrm{Ew}}_{0,n})$. 
For the first inequality, \eqref{Ineq:lower_conjugacy_invariant}  and \eqref{eq : CI_LIS 1} imply that 
\begin{align}
   e^{-n I_{\LIS,1}(x) (1+o(1)) }&\ge 
\P(\LIS(\sigma^{\mathrm{Ew}}_{0,n}) \le x\sqrt{n}) \label{tolo}
   \\&\ge (1+o(1)) \P(\LIS(\sigma_{n}) \le (x-3\varepsilon)\sqrt{n}, \#(\sigma_{n}) <\varepsilon^2n ).\nonumber
  \end{align}

Moreover, if $\sigma_n $ satisfies $\CI{ 1,1}$, for every $M>0$ and $n$ large enough, 
$$\P(\LIS(\sigma_{n}) \le (x-3\varepsilon)\sqrt{n}, \#(\sigma_{n}) <\varepsilon^2n )\ge    \P(\LIS(\sigma_{n}) \le (x-3\varepsilon)\sqrt{n}) - e^{-nM}.$$
which gives \eqref{tol} with \eqref{tolo}.

The second inequality is an application of \cite[Lemma~21]{zbMATH07502136}.
First remark that a conjugacy invariant permutation conditioned on 
$\#(\sigma_n)\le k$ is still conjugacy invariant. One needs only to prove this result for permutations  where the number of cycles is less than 
$\varepsilon n $ for $n \geq n_0$ almost surely.

By choosing $\rho$ to be $\sigma^{\mathrm{Ew}}_{0,n}$ and  $k=1$ in \cite[Lemma~21]{zbMATH07502136},  one can reformulate the lemma to obtain that for any conjugacy invariant permutation $\sigma_n$, there exists $\widehat{\sigma_n}\overset{d}{=}\sigma_n$ such that, for any $1\le a<b$ and for any $c>0$   
$$\mathbb{E} \(\(\LIS(\widehat{\sigma_n})-\LIS(\sigma^{\mathrm{Ew}}_{0,n}) \)_{-} \middle| \#\widehat{\sigma}_n <c,\LIS(\sigma_{0,n}^{Ew}) \in[a,b]\) \leq \frac{cb}{n}. $$
By Markov inequality, and by taking $a=x\sqrt{n}+4x\varepsilon\sqrt{n}$ and $b=2x\sqrt{n}$ and $c=\varepsilon n$ we obtain that

\begin{align*}
    \mathbb{P} \(\LIS({\sigma_n}) \geq x\sqrt{n} \)
    &=     \mathbb{P} \(\LIS(\widehat{\sigma_n}) \geq x\sqrt{n} \) \\&\geq 
    \mathbb{P}\(\LIS(\sigma^{\mathrm{Ew}}_{0,n} \in [a,b],\(\LIS(\widehat{\sigma_n})-\LIS(\sigma^{\mathrm{Ew}}_{0,n}) \)_{-} \le 4x\varepsilon\sqrt{n}\)
    \\&=
    \mathbb{P}\(\(\LIS(\widehat{\sigma_n})-\LIS(\sigma^{\mathrm{Ew}}_{0,n}) \)_{-} \le 4x\varepsilon\sqrt{n} | \LIS(\sigma^{\mathrm{Ew}}_{0,n} \in [a,b],\) \mathbb{P}(\LIS(\sigma^{\mathrm{Ew}}_{0,n} \in [a,b])
    \\&\geq \(1-\frac{\frac{cb}{n}}{4x\varepsilon\sqrt{n}}\) I_{\LIS,\frac{1}{2}} \mathbb{P}(\LIS(\sigma^{\mathrm{Ew}}_{0,n} \in [a,b])
    =\frac{1}{2}\mathbb{P}(\LIS(\sigma^{\mathrm{Ew}}_{0,n} \in [a,b])
    \end{align*}
which concludes the proof.
\end{proof}


{\textbf{Acknowledgement}}

The second author would  acknowledge  useful discussions with Reda Chhaibi,  Valentin Féray  and Christian  Houdr\'{e}. He would also like to express gratitude to the organizers of the conference "High Dimensional Statistics and Random Matrices" where the initial idea for the project originated.
Both authors are  partially supported by ERC Project LDRAM (ERC-2019-ADG Project: 884584). The second author is partially supported by LabEx CIMI (ANR-11-LABX-0040) and a CNRS PEPS grant CAPRICES and ANR LOUCCOUM.

\bibliographystyle{abbrv}

\begin{thebibliography}{}

\end{thebibliography}


\begin{thebibliography}{10}

\bibitem{MR3458588}
R.~Arratia, A.~D. Barbour, and S.~Tavar\'{e}.
\newblock Exploiting the {F}eller coupling for the {E}wens sampling formula.
\newblock {\em Statist. Sci.}, 31(1):27--29, 2016.

\bibitem{Augeri}
F.~Augeri.
\newblock Large deviations principle for the largest eigenvalue of {W}igner
  matrices without {G}aussian tails.
\newblock {\em Electron. J. Probab.}, 21:Paper No. 32, 49, 2016.

\bibitem{MR4239836}
F.~Augeri, A.~Guionnet, and J.~Husson.
\newblock Large deviations for the largest eigenvalue of sub-{G}aussian
  matrices.
\newblock {\em Comm. Math. Phys.}, 383(2):997--1050, 2021.

\bibitem{BaNa}
V.~Bahier and J.~Najnudel.
\newblock On smooth mesoscopic linear statistics of the eigenvalues of random
  permutation matrices.
\newblock {\em J. Theor. Probab.}, 35(3):1640--1661, 2022.

\bibitem{Baik}
J.~Baik, P.~Deift, and K.~Johansson.
\newblock On the distribution of the length of the longest increasing
  subsequence of random permutations.
\newblock {\em J. Amer. Math. Soc.}, 12(4):1119--1178, 1999.

\bibitem{BaikRains}
J.~Baik and E.~M. Rains.
\newblock The asymptotics of monotone subsequences of involutions.
\newblock {\em Duke Math. J.}, 109(2):205--281, 2001.

\bibitem{BeArDa}
G.~Ben~Arous and K.~Dang.
\newblock On fluctuations of eigenvalues of random permutation matrices.
\newblock {\em Ann. Inst. Henri Poincar{\'e}, Probab. Stat.}, 51(2):620--647,
  2015.

\bibitem{BADG}
G.~Ben~Arous, A.~Dembo, and A.~Guionnet.
\newblock Aging of spherical spin glasses.
\newblock {\em Probab. Theory Related Fields}, 120(1):1--67, 2001.

\bibitem{BAG97}
G.~Ben~Arous and A.~Guionnet.
\newblock Large deviations for {Wigner}'s law and {Voiculescu}'s
  non-commutative entropy.
\newblock {\em Probab. Theory Relat. Fields}, 108(4):517--542, 1997.

\bibitem{fredes2024sharpanalysisjointdistribution}
B.~Bercu, M.~Bonnefont, L.~Fredes, and A.~Richou.
\newblock Sharp analysis on the joint distribution of the number of descents
  and inverse descents in a random permutation, 2024.

\bibitem{bercu2022sharp}
B.~Bercu, M.~Bonnefont, and A.~Richou.
\newblock Sharp large deviations and concentration inequalities for the number
  of descents in a random permutation, 2022.

\bibitem{BG}
G.~Biroli and A.~Guionnet.
\newblock Large deviations for the largest eigenvalues and eigenvectors of
  spiked {G}aussian random matrices.
\newblock {\em Electron. Commun. Probab.}, 25:Paper No. 70, 13, 2020.

\bibitem{Concentration}
S.~Boucheron, G.~Lugosi, and P.~Massart.
\newblock {\em {Concentration Inequalities: A Nonasymptotic Theory of
  Independence}}.
\newblock Oxford University Press, 02 2013.

\bibitem{Curien}
T.~Budzinski, N.~Curien, and B.~Petri.
\newblock Universality for random surfaces in unconstrained genus.
\newblock {\em Electron. J. Comb.}, 26(4):research paper p4.2, 35, 2019.

\bibitem{Ch2013}
D.~Chafa{\"i}, Y.~Doumerc, and F.~Malrieu.
\newblock {Processus des restaurants chinois et loi d'Ewens}.
\newblock {\em {Revue de Math{\'e}matiques Sp{\'e}ciales (RMS)}},
  123(3):56--74, Mar. 2013.

\bibitem{CDG}
N.~Cook, R.~Ducatez, and A.~Guionnet.
\newblock Full large deviation principles for the largest eigenvalue of
  sub-gaussian wigner matrices.
\newblock {\em arXiv:2302.14823}, 2023.

\bibitem{DZ}
A.~Dembo and O.~Zeitouni.
\newblock {\em Large deviations techniques and applications}, volume~38 of {\em
  Applications of Mathematics (New York)}.
\newblock Springer-Verlag, New York, second edition, 1998.

\bibitem{deuschel_zeitouni_1999}
J.-D. DEUSCHEL and O.~ZEITOUNI.
\newblock On increasing subsequences of i.i.d. samples.
\newblock {\em Combinatorics, Probability and Computing}, 8(3):247–263, 1999.

\bibitem{zbMATH06111055}
I.~Dumitriu and S.~Pal.
\newblock Sparse regular random graphs: spectral density and eigenvectors.
\newblock {\em Ann. Probab.}, 40(5):2197--2235, 2012.

\bibitem{LaYa}
L.~Erdh{o}s and H.-T. Yau.
\newblock Universality of local spectral statistics of random matrices.
\newblock {\em Bull. Amer. Math. Soc. (N.S.)}, 49(3):377--414, 2012.

\bibitem{EWENS197287}
W.~J. Ewens.
\newblock The sampling theory of selectively neutral alleles.
\newblock {\em Theoret. Population Biology}, 3:87--112; erratum, ibid. 3
  (1972), 240; erratum, ibid. 3 (1972), 376, 1972.

\bibitem{FerayRandomPermutationsCumulants}
V.~F{\'e}ray.
\newblock {Asymptotic behavior of some statistics in {E}wens random
  permutations}.
\newblock {\em Electron. J. Probab}, 18(76):1--32, 2013.

\bibitem{Fulman2021}
J.~Fulman, G.~B. Kim, and S.~Lee.
\newblock Central limit theorem for peaks of a random permutation in a fixed
  conjugacy class of
  {\textdollar}{\textdollar}s{\_}n{\textdollar}{\textdollar}.
\newblock {\em Annals of Combinatorics}, 26(1):97--123, Dec. 2021.

\bibitem{GHN}
S.~Ganguly, E.~Hiesmayr, and K.~Nam.
\newblock Spectral large deviations of sparse random matrices.
\newblock {\em arXiv 2206.06954}, 2022.

\bibitem{GuHu1}
A.~Guionnet and J.~Husson.
\newblock Large deviations for the largest eigenvalue of {R}ademacher matrices.
\newblock {\em Ann. Probab.}, 48(3):1436--1465, 2020.

\bibitem{GM}
A.~Guionnet and M.~Ma\"{\i}da.
\newblock Large deviations for the largest eigenvalue of the sum of two random
  matrices.
\newblock {\em Electron. J. Probab.}, 25:Paper No. 14, 24, 2020.

\bibitem{hamaker}
Z.~Hamaker and B.~Rhoades.
\newblock Characters of local and regular permutation statistics, 2022.

\bibitem{houdre2010}
C.~Houdr\'{e} and R.~Restrepo.
\newblock A probabilistic approach to the asymptotics of the length of the
  longest alternating subsequence.
\newblock {\em Electron. J. Combin.}, 17(1):Research Paper 168, 19, 2010.

\bibitem{kurt}
K.~Johansson.
\newblock Shape fluctuations and random matrices.
\newblock {\em Comm. Math. Phys.}, 209(2):437--476, 2000.

\bibitem{Joh}
K.~Johansson.
\newblock Discrete orthogonal polynomial ensembles and the {Plancherel}
  measure.
\newblock {\em Ann. Math. (2)}, 153(1):259--296, 2001.

\bibitem{kammoun2018}
M.~S. Kammoun.
\newblock Monotonous subsequences and the descent process of invariant random
  permutations.
\newblock {\em Electron. J. Probab.}, 23:31 pp., 2018.

\bibitem{kam2}
M.~S. {Kammoun}.
\newblock {On the Longest Common Subsequence of Conjugation Invariant Random
  Permutations}.
\newblock {\em The Electronic Journal of Combinatorics}, 27, 2020.

\bibitem{zbMATH07502136}
M.~S. Kammoun.
\newblock Universality for random permutations and some other groups.
\newblock {\em Stochastic Processes Appl.}, 147:76--106, 2022.

\bibitem{zbMATH07252777}
M.~S. Kammoun and M.~Ma{\"{\i}}da.
\newblock A product of invariant random permutations has the same small cycle
  structure as uniform.
\newblock {\em Electron. Commun. Probab.}, 25:14, 2020.
\newblock Id/No 57.

\bibitem{KimLee2020}
G.~Kim and S.~Lee.
\newblock {Central limit theorem for descents in conjugacy classes of $S_n$}.
\newblock {\em J. Combin. Th., Series A}, 169:105--123, 2020.

\bibitem{Kingman1}
J.~F.~C. Kingman.
\newblock Random partitions in population genetics.
\newblock {\em Proc. Roy. Soc. London Ser. A}, 361(1704):1--20, 1978.

\bibitem{Laplace}
P.-S. Laplace.
\newblock {\em Th{\'e}orie analytique des probabilit{\'e}s. {Vol}. {II}.
  {Livre} {II}: {Th{\'e}orie} g{\'e}n{\'e}rale des probabilit{\'e}s.
  {Suppl{\'e}ments}}.
\newblock Paris: {\'E}ditions Jacques Gabay, reprint of the 1820 third edition
  ({Book} {II}) and of the 1816, 1818, 1820 and 1825 originals edition, 1995.

\bibitem{LOGAN1977206}
B.~F. Logan and L.~A. Shepp.
\newblock A variational problem for random {Y}oung tableaux.
\newblock {\em Advances in Math.}, 26(2):206--222, 1977.

\bibitem{MD_LIS_upper}
M.~L\"{o}we and F.~Merkl.
\newblock Moderate deviations for longest increasing subsequences: the upper
  tail.
\newblock {\em Comm. Pure Appl. Math.}, 54(12):1488--1520, 2001.

\bibitem{Myl}
M.~Ma\"{\i}da.
\newblock Large deviations for the largest eigenvalue of rank one deformations
  of {G}aussian ensembles.
\newblock {\em Electron. J. Probab.}, 12:1131--1150, 2007.

\bibitem{MajS}
S.~N. Majumdar and G.~Schehr.
\newblock Top eigenvalue of a random matrix: large deviations and third order
  phase transition.
\newblock {\em J. Stat. Mech. Theory Exp.}, (1):P01012, 31, 2014.

\bibitem{meliot2022asymptotics}
P.-L. Méliot and A.~Nikeghbali.
\newblock Asymptotics of the major index of a random standard tableau, 2022.

\bibitem{pinsky2023large}
R.~G. Pinsky.
\newblock Large deviations for the longest alternating and the longest
  increasing subsequence in a random permutation avoiding a pattern of length
  three, 2023.

\bibitem{MR2820763}
D.~Romik.
\newblock Local extrema in random permutations and the structure of longest
  alternating subsequences.
\newblock In {\em 23rd {I}nternational {C}onference on {F}ormal {P}ower
  {S}eries and {A}lgebraic {C}ombinatorics ({FPSAC} 2011)}, Discrete Math.
  Theor. Comput. Sci. Proc., AO, pages 825--834. Assoc. Discrete Math. Theor.
  Comput. Sci., Nancy, 2011.

\bibitem{MR3468738}
D.~Romik.
\newblock {\em The surprising mathematics of longest increasing subsequences},
  volume~4 of {\em Institute of Mathematical Statistics Textbooks}.
\newblock Cambridge University Press, New York, 2015.

\bibitem{Sagan2001}
B.~E. Sagan.
\newblock {\em The symmetric group}, volume 203 of {\em Graduate Texts in
  Mathematics}.
\newblock Springer-Verlag, New York, second edition, 2001.
\newblock Representations, combinatorial algorithms, and symmetric functions.

\bibitem{MR1653841}
T.~Sepp\"{a}l\"{a}inen.
\newblock Large deviations for increasing sequences on the plane.
\newblock {\em Probab. Theory Related Fields}, 112(2):221--244, 1998.

\bibitem{tao}
T.~Tao and V.~Vu.
\newblock Random matrices: {The} universality phenomenon for {Wigner}
  ensembles.
\newblock In {\em Modern aspects of random matrix theory. Based on lectures
  delivered at the 2013 AMS short course on random matrices, San Diego, CA,
  USA, January 6--7, 2013}, pages 121--172. Providence, RI: American
  Mathematical Society (AMS), 2014.

\bibitem{tracywidom}
C.~A. Tracy and H.~Widom.
\newblock Level-spacing distributions and the {Airy} kernel.
\newblock {\em Commun. Math. Phys.}, 159(1):151--174, 1994.

\bibitem{TSILEVICH1998StationaryMO}
N.~Tsilevich.
\newblock Stationary measures on the space of virtual permutations for an
  action of the innnite symmetric group.
\newblock 1998.

\bibitem{MR0129165}
S.~M. Ulam.
\newblock Monte {C}arlo calculations in problems of mathematical physics.
\newblock In {\em Modern mathematics for the engineer: {S}econd series}, pages
  261--281. McGraw-Hill, New York, 1961.

\bibitem{MR0480398}
A.~M. Vershik and S.~V. Kerov.
\newblock Asymptotic behavior of the {P}lancherel measure of the symmetric
  group and the limit form of {Y}oung tableaux.
\newblock {\em Dokl. Akad. Nauk SSSR}, 233(6):1024--1027, 1977.

\bibitem{zbMATH03113773}
E.~P. Wigner.
\newblock Characteristic vectors of bordered matrices with infinite dimensions.
\newblock {\em Ann. Math. (2)}, 62:548--564, 1955.

\end{thebibliography}

\end{document}